\title[Ternary/Quaternary]{Ternary and Quaternary Curves of Small Fixed Genus
  and Gonality with Many Rational Points}
\author{Xander Faber}
\author{Jon Grantham}
\address{Institute for Defense Analyses \\
Center for Computing Sciences \\
17100 Science Drive \\
Bowie, MD} 
\begin{document}

\begin{abstract}
  We extend the computations from our previous paper \cite{Faber_Grantham_GF2}
  to determine the maximum number of rational points on a curve over $\FF_3$ and
  $\FF_4$ with fixed gonality and small genus. We find, for example, that there
  is no curve of genus~5 and gonality~6 over a finite field. We propose two
  conjectures based on our data. First, an optimal curve of genus~$g$ has
  gonality at most $\geomgon$. Second, an optimal curve of gonality $\gamma$ and
  large genus over $\FF_q$ has $\gamma(q+1)$ rational points.
\end{abstract}

% \tableofcontents
\maketitle

%%%%%%%%%%%%%%%%%%%%%%%%%%%%%%%%%%%%%%%%%%%%%%%%%%%%%%%%%%%%%%%%%%%%%%%%%%%%%%%%
%%%%%%%%%%%%%%%%%%%%%%%%%%%%%%%%%%%%%%%%%%%%%%%%%%%%%%%%%%%%%%%%%%%%%%%%%%%%%%%%

\section{Introduction}

Let $C$ be a smooth proper geometrically connected scheme of dimension~1 over a
finite field --- a ``curve'', for brevity. There are two well-known upper bounds
for the number of rational points on $C$. First, there is the bound of Andr\'e
Weil that arises in the study of the zeta function:
\[
\#C(\FF_q) \le q + 1 + 2g\sqrt{q},
\]
where $g$ is the genus of $C$. Define the quantity $N_q(g)$ to be the maximum
number of rational points on a curve of genus $g$ over $\FF_q$. Many techniques
have been developed to bound $N_q(g)$ and to produce examples of curves
with large numbers of rational points; see \cite{voight},
\cite{Niederreiter_Xing_book}, or \cite{Serre_Rational_Points_Book}. 
Van der Geer and van der Vlugt
\cite{vandersquared} compiled the first comprehensive table of maximal values
for small genus and field size, which eventually evolved into
\url{manypoints.org}. Table~\ref{tab:manypoints} gives the first few values of
$N_q(g)$.

  \begin{table}[ht]
\begin{tabular}{c||c|c|c}
  $g$ & $N_2(g)$ & $N_3(g)$ & $N_4(g)$ \\
  \hline
  \hline
  0 & 3 & 4 & 5 \\
  1 & 5 & 7 & 9 \\
  2 & 6 & 8 & 10 \\
  3 & 7 &10 & 14 \\
  4 & 8 & 12 & 15 \\
  5 & 9 & 13 & 17 \\
\end{tabular}
\caption{Maximum number of rational points on a curves of genus $g$ over $\FF_q$}
\label{tab:manypoints}
\end{table}

Second, if $f \colon C \to \PP^1$ is a morphism defined over $\FF_q$, then every
rational point of $C$ must map to a rational point of $\PP^1$. Consequently, we
get the bound $\#C(\FF_q) \le (\deg f)(q+1)$. This bound is optimized when we
take $f$ to have minimum degree over all such morphisms --- we call this degree
the \textbf{gonality} of $C$ and write $\gamma$ for it. This gives rise to the
``gonality-point bound'' for $C$:
\begin{equation}
  \label{eq:gonality-point}
\#C(\FF_q) \le \gamma(q+1). 
\end{equation}
We define the quantity $N_q(g,\gamma)$ to be the supremum of the number of
rational points on a curve $C$ of genus $g$ and gonality $\gamma$. (Many pairs
$(g,\gamma)$ exist for which there is no such curve, so we use the supremum.)

\begin{remark}
One might ask why we do not consider the maximum number of rational points on a
curve over $\FF_q$ of fixed gonality, while letting the genus vary freely. We
expect this quantity to be $\gamma(q+1)$ for sufficently large genus; see
Conjecture~\ref{conj:large_genus}.
\end{remark}

In our first paper \cite{Faber_Grantham_GF2}, we laid out a plan for computing the
maximum number of rational points on a curve $C$ over $\FF_q$ with small genus
and fixed gonality, and we executed this program for curves over the binary
field with genus at most 5. In the present paper, we extend those computations
to the fields $\FF_3$ and $\FF_4$. For many cases where curves of a given genus
and gonality exist, this was fairly straightforward: we either dug
through the literature to find examples of curves that met certain bounds we had
already exhibited, or else we ran general search code that we wrote for our
first paper. But in the cases where we prove non-existence results, the
increased size of the relevant search spaces requires improved algorithms and
code development. 

Table~\ref{tab:results} summarizes the quantities $N_q(g,\gamma)$ for $q \le 4$
and $g \le 5$. The entries in the column for $q = 2$ were determined in
\cite{Faber_Grantham_GF2}, while the entries for $q=3,4$ are justified in the
present paper. Recall that $N_q(g,\gamma) = - \infty$ if there is no curve of
genus $g$ and gonality~$\gamma$. The gonality of a curve over a finite field is
at most one more than the genus \cite[Prop.~2.1]{Faber_Grantham_GF2}, so we omit
entries in the table beyond $g+1$.

\begin{table}[th]
  \begin{tabular}{c|c|c|c|c}
  $g$ & $\gamma$ & $N_2(g,\gamma)$ & $N_3(g,\gamma)$ & $N_4(g,\gamma)$ \\
  \hline \hline
  0 & 1 & 3 & 4 & 5 \\
  \hline
  1 & 2 & 5 & 7 & 9 \\
  \hline
  2 & 2 & 6 & 8 & 10 \\
  \hline
  3 & 2 & 6 & 8 & 10 \\
   & 3 & 7 & 10 & 14 \\
   & 4 & 0 & 0 & 0 \\
  \hline
  4 & 2 & 6 & 8 & 10 \\
   & 3 & 8 & 12 & 15 \\ 
   & 4 & 5 & 10 & 13 \\
   & 5 & 0 & 0 & $-\infty$ \\
  \hline
  5 & 2 & 6 & 8 & 10 \\
   & 3 & 8 & 12 & 15 \\ 
   & 4 & 9 &13 & 17 \\
   & 5 & 3 & 4 & 5 \\
   & 6 & $-\infty$ & $-\infty$ & $-\infty$ \\
  \end{tabular}
  \caption{Supremum of the number of rational points on a binary, ternary, or
    quaternary curve with fixed genus and gonality.}
  \label{tab:results}
\end{table}

The final row of Table~\ref{tab:results} suggests a pattern, which we are able
to complete:

\begin{theorem}
  \label{thm:genus5_xp}
There is no curve of genus~5 and gonality~6 over a finite field.
\end{theorem}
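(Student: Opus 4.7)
My plan is to split into two regimes according to the size of $q$: for small $q$ I would quote the exhaustive searches that make up the rest of the paper, while for large $q$ I would give a short Riemann--Roch argument.

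For $q \in \{2,3,4\}$, the final row of Table~\ref{tab:results} records $N_q(5,6) = -\infty$, and these entries are verified by the computer searches carried out in the following sections of this paper (with the $q=2$ column reproduced from \cite{Faber_Grantham_GF2}); so in this range nothing further is needed.

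For $q \ge 5$, the plan is to show that every curve $C$ of genus~$5$ over $\FF_q$ carries an $\FF_q$-rational pencil of degree at most~$5$, which forces $\gamma(C) \le 5$ and contradicts the hypothesis $\gamma(C)=6$. First I would produce an $\FF_q$-rational effective divisor $E$ of degree~$3$, using the Hasse--Weil bound
\[
\#C(\FF_{q^3}) \;\ge\; q^3 + 1 - 10\,q^{3/2},
\]
whose right-hand side is positive for $q \ge 5$: this supplies a closed point of $C$ of degree dividing~$3$, and then $E$ is either three times a rational point or the closed degree-$3$ point itself. Next I would set $D := K - E$ for $K$ a canonical divisor (so $\deg D = 5$) and apply Riemann--Roch to get $h^0(D) = h^0(E) + \deg D - g + 1 \ge 2$, so that $|D|$ is at least a pencil. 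Removing its ($\FF_q$-rational) base locus finally yields a base-point-free pencil of degree at most~$5$ over $\FF_q$, i.e., an $\FF_q$-morphism $C \to \PP^1$ of degree at most~$5$.

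The hard part will be the small fields $q \in \{2,3,4\}$: there the Weil bound cannot force the existence of an effective divisor of degree~$3$ (a genus-$5$ curve over $\FF_q$ with no rational points and no closed points of degree $2$ or $3$ is not excluded by point-counting), and the Riemann--Roch construction simply fails to produce an $\FF_q$-rational pencil of degree~$5$. Ruling out gonality~$6$ for these $q$ is precisely what the enumerative work of this paper and \cite{Faber_Grantham_GF2} accomplishes.
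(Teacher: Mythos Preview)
Your proof is correct and follows the same strategy as the paper: dispose of $q \le 4$ via the computational results recorded in Table~\ref{tab:results}, and for $q \ge 5$ use the Weil lower bound on $\#C(\FF_{q^3})$ to produce an $\FF_q$-rational effective divisor of degree~$3$, then Riemann--Roch to obtain a pencil of degree at most~$5$. The only cosmetic difference is that the paper packages your Riemann--Roch step as a citation to \cite[Cor.~2.5]{Faber_Grantham_GF2} rather than spelling it out.
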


\begin{proof}
  Suppose there were a curve $C$ of genus~5 and gonality~6 over $\FF_q$.
  Table~\ref{tab:results} shows that $q \ge 5$. Weil's lower bound yields
  \[
  \#C(\FF_{q^3}) \ge q^3 + 1 - 10q^{3/2} = (q^{3/2} - 5)^2 - 24 \ge 14, 
  \]
  so that $C(\FF_{q^3}) \ne \varnothing$. The presence of a cubic point implies
  that $C$ has gonality at most 5 by \cite[Cor.~2.5]{Faber_Grantham_GF2}.
\end{proof}

Serre introduced a powerful technique for showing that curves over finite fields
with certain numerical properties cannot exist \cite[II - The Case
  $q=2$]{Serre_Curves_Harvard}. Lauter transformed this technique into a proper
algorithm in the self-contained article \cite{Lauter_algorithm_1998}; see also
\cite{Lauter_genus5_GF3} and \cite[{\S}VII.2]{Serre_Rational_Points_Book}. The
idea is to efficiently list all real Weil polynomials (essentially zeta
functions) that could belong to a curve with given genus and number of rational
points (perhaps over extension fields). Each of these is the real Weil
polynomial of an isogeny class of abelian varieties, and one attempts to show by
arithmetic/geometric methods that there is no Jacobian variety in this class.
For example, upon applying Lauter's algorithm to the case of curves of genus~5
and gonality~6 over $\FF_4$, one finds a single real Weil polynomial:
\[
  (T - 4)^2  (T + 1)^3.
\]
None of the methods developed in
\cite{Howe_Lauter_2003,Howe_Lauter_new_methods_2012,Lauter_geometric_methods_2001}
seem able to rule out the possibility of a Jacobian in the associated isogeny
class (which would yield an alternate proof of Theorem~\ref{thm:genus5_xp}).  We
will return to this line of thought in the forthcoming paper
\cite{Faber_Grantham_XP}.

An \textbf{optimal curve} is a curve of genus $g$ over $\FF_q$ with $N_q(g)$
rational points. One expects the geometry of these to be rather special because
their arithmetic sets them apart. For example, Rigato showed that for small-genus
curves over $\FF_2$, there are very few isomorphism classes of optimal curves
\cite{Rigato_optimal_curves}. We propose a conjecture on the gonality of optimal
curves:

\begin{conjecture}[Optimal Gonality]
  \label{conj:optimal}
  Let $C$ be an optimal curve over $\FF_q$ of genus~$g$. Then $C$ has gonality
  at most $\geomgon$.
\end{conjecture}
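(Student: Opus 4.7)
The plan is to combine the classical Brill--Noether upper bound on geometric gonality with a descent argument that exploits the many rational points on an optimal curve. Over $\overline{\FF}_q$, classical Brill--Noether theory shows that every smooth projective curve of genus~$g$ admits a $g^1_d$ with $d = \geomgon$; hence the geometric gonality satisfies $\gamma_{\overline{\FF}_q}(C) \le \geomgon$. The conjecture would therefore follow from the equality $\gamma(C) = \gamma_{\overline{\FF}_q}(C)$ for optimal $C$, i.e.\ from showing that some minimal pencil on $C_{\overline{\FF}_q}$ descends to $\FF_q$.

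To formalize the descent, I would study the Brill--Noether locus $W^1_d(C) \subset \mathrm{Pic}^d(C)$ with $d = \gamma_{\overline{\FF}_q}(C)$, whose $\overline{\FF}_q$-points parametrize degree-$d$ pencils on $C$. Its expected dimension is $\rho(g,1,d) \in \{0,1\}$. When $\rho = 0$, the task is to show that the finite Galois set $W^1_d(\overline{\FF}_q)$ contains a fixed point; when $\rho \ge 1$, the task is to produce an $\FF_q$-point on a positive-dimensional scheme. In both cases, the hope is that having $N_q(g)$ rational points on $C$ forces the required rationality.

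The most concrete tool available is the Abel--Jacobi map $\alpha_d \colon C^{(d)} \to \mathrm{Pic}^d(C)$: whenever two distinct effective $\FF_q$-rational divisors of degree $d$ are linearly equivalent, their common divisor class $D$ satisfies $\ell(D) \ge 2$, producing (after removing any base locus) an $\FF_q$-rational pencil of degree at most~$d$. Counting effective divisors supported on $C(\FF_q)$ and comparing with $\#\mathrm{Jac}(C)(\FF_q)$ gives a sufficient pigeonhole criterion, which I would expect to succeed in the regime where $q$ is small and $N_q(g)$ is close to the Weil bound.

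The main obstacle is the large-$q$ regime: $\#\mathrm{Jac}(C)(\FF_q)$ grows like $q^g$ and dwarfs the count of effective divisors supported on $C(\FF_q)$, so the naive pigeonhole collapses. Overcoming this would require finer input --- for instance, rigidity properties of optimal curves in the spirit of Rigato's classification over $\FF_2$, or specific constraints on the Galois action on $W^1_d$ imposed by the Weil polynomial of an optimal curve. Handling this regime uniformly appears to be the essential difficulty, and I suspect it is the reason the statement is offered as a conjecture rather than a theorem.
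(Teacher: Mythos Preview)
The statement is a \emph{conjecture} in the paper, not a theorem; the authors offer no proof and explicitly label it as open. Their supporting evidence is purely computational: the tables establish the bound for $q\le 4$ and $g\le 5$, Rigato's classification handles $q=2$, $g=6$, and a few further examples are cited for $g\in\{7,8,9\}$ over $\FF_2$. There is therefore no argument of the paper's to compare your proposal against.

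That said, your write-up is honest about its own status: you sketch a plausible line of attack via Brill--Noether existence over $\overline{\FF}_q$ followed by a Galois-descent / pigeonhole argument on $W^1_d$ or on effective divisors, and you correctly flag the large-$q$ regime as the point where the strategy breaks down. Two remarks on the sketch itself. First, the existence of a $g^1_d$ with $d=\geomgon$ over $\overline{\FF}_q$ is indeed unconditional (Kempf, Kleiman--Laksov), so the reduction to a descent problem is sound. Second, even in the small-$q$ regime the pigeonhole on divisors supported on $C(\FF_q)$ does not directly yield gonality $\le d$: a collision in $\mathrm{Pic}^d$ gives a class with $\ell(D)\ge 2$, but after stripping the base locus you only get a pencil of some degree $\le d$, not necessarily one that is $\FF_q$-rational of degree exactly what you want, and one still has to control the gap between arithmetic and geometric gonality. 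So the heuristic is suggestive but not yet a proof even for small~$q$, which is consistent with your own conclusion that the statement remains conjectural.
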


Note that $\geomgon$ is the maximum geometric gonality of a curve $C_{/\FF_q}$
of genus~$g$ --- i.e., among morphisms $C \to \PP^1$ defined over the algebraic
closure $\bar \FF_q$, there exists one of degree at most $\geomgon$.

Our Optimal Gonality Conjecture holds for $q = 2,3,4$ and $g \le 5$ by
Tables~\ref{tab:manypoints} and~\ref{tab:results}. Rigato's work shows that it
holds for $q = 2$ and $g = 6$ as well; there are exactly two isomorphism classes
of optimal curves over $\FF_2$ of genus 6, and both of them have gonality 4. In
the appendix to \cite{Faber_Grantham_GF2}, additional examples of optimal curves
of genus 7, 8, and 9 over $\FF_2$ are given, and in each case the gonality is
strictly smaller than $\geomgon$.

In \cite{Faber_Grantham_GF2}, we posed a conjecture on the maximum number of
rational points on a curve over $\FF_2$ of gonality $\gamma$ as the genus tends
to infinity. Based on our new data, we feel emboldened to extend the statement
to all finite fields:

\begin{conjecture}
  \label{conj:large_genus}
  Fix a prime power $q$ and an integer $\gamma \ge 2$. For $g$ sufficiently
  large, $N_q(g,\gamma) = \gamma (q+1)$.
\end{conjecture}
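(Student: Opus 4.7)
The plan is to establish $N_q(g,\gamma) \ge \gamma(q+1)$ by explicit construction, since the reverse inequality is the gonality-point bound~\eqref{eq:gonality-point}. The task is, for all sufficiently large $g$, to exhibit a curve $C$ of genus $g$ carrying a morphism $f \colon C \to \PP^1$ of degree $\gamma$ that is completely split over every rational point of $\PP^1$, meaning $f^{-1}(P)$ consists of $\gamma$ distinct $\FF_q$-rational points for each $P \in \PP^1(\FF_q)$. Such a cover automatically has $\gamma(q+1)$ rational points, so only two things remain: (i) realize every sufficiently large genus this way, and (ii) confirm that $C$ has gonality exactly $\gamma$ and not less.

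For (i), I would split on whether $\gcd(\gamma, q) = 1$. In the tame case, a Kummer cover $y^\gamma = h(x)$ is completely split over $\PP^1(\FF_q)$ precisely when $h(a) \in (\FF_q^\times)^\gamma$ for every $a \in \FF_q$, together with an analogous condition on the leading coefficient and $\deg h \bmod \gamma$ at infinity. Requiring $h$ squarefree imposes the curve structure, and Riemann--Hurwitz yields a genus that grows linearly in $\deg h$. A sieve argument on the space of squarefree polynomials of degree $d$, treating the local conditions at the $q+1$ rational points of $\PP^1$ as essentially independent, should produce a positive density of admissible $h$ and hence existence for all large $d$. To cover the residue classes of $g$ missed by a single Kummer family, I would prescribe mild additional ramification over one or two non-rational points of $\PP^1$ to shift the genus by controlled small increments. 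In the wild case $p \mid \gamma$, I would build $\gamma$ as a tower combining Kummer and Artin--Schreier steps, arranging each step to preserve the completely-split property (which for an Artin--Schreier layer $y^p - y = u(x)$ becomes a trace condition $\mathrm{Tr}_{\FF_q/\FF_p}(u(a)) = 0$ at every $a \in \FF_q$).

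The main obstacle is (ii): excluding gonality strictly less than $\gamma$. For $\gamma = 2$ there is nothing to do, as any curve of positive genus has gonality at least $2$. For $\gamma \ge 3$, the natural approach is a dimension count on the Hurwitz space of degree-$\gamma$ covers of $\PP^1$: covers whose source admits a $g^1_{\gamma-1}$ form a proper closed subvariety, and a Lang--Weil-type estimate ought to guarantee $\FF_q$-points outside it for $g$ sufficiently large. A cleaner route, if it can be made to work, is to enforce gonality $\gamma$ directly by an explicit invariant of the construction --- for instance, by choosing the ramification divisor of $f$ so that any pencil of degree less than $\gamma$ on $C$ would be incompatible with the arithmetic of the branch locus. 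I expect that reconciling this dimension/invariant argument with the completely-split constraint, uniformly across all sufficiently large $g$ and across all residue classes of $g$ accessible only by the auxiliary-ramification trick above, is where most of the technical work will live.
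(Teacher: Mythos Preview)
The statement you are attempting to prove is labeled a \emph{Conjecture} in the paper, not a theorem. The paper offers no proof of it; only the case $\gamma = 2$ is established (Theorem~\ref{thm:hyperelliptic_asymptotic}), via explicit hyperelliptic families. For $\gamma \ge 3$ the statement is presented as open, supported solely by the data in Table~\ref{tab:results}. So there is no ``paper's own proof'' to compare against --- you are proposing an attack on an open problem.

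Your outline correctly identifies that the upper bound is immediate from the gonality-point inequality and that the real content lies in part~(ii): forcing the gonality to be \emph{exactly} $\gamma$. But neither of your two routes for (ii) is close to a proof. The Hurwitz-space dimension count plus Lang--Weil is the natural heuristic, yet making it rigorous requires controlling, inside a moduli space that also encodes the completely-split condition over $\PP^1(\FF_q)$, the locus of covers whose source carries a $g^1_{\gamma-1}$; that is the substance of the conjecture, not an input. The ``explicit invariant'' alternative is left entirely unspecified.

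There is also a concrete gap in part~(i). A Kummer cover $y^\gamma = h(x)$ has $\gamma$ distinct $\FF_q$-rational points in the fiber over $a$ only when $\FF_q$ contains all $\gamma$-th roots of unity, i.e., when $\gamma \mid q-1$. Your tameness hypothesis $\gcd(\gamma,q)=1$ is not sufficient: for $q=2$, $\gamma=3$, the equation $y^3 = 1$ has a single solution in $\FF_2$, so no fiber is completely split. The tower construction in the wild case inherits the same defect at its Kummer layers. Thus even the existence step needs a different construction before the gonality obstruction can be confronted.
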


In \S\ref{sec:hyperelliptic}, we address the elliptic and hyperelliptic entries
in Table~\ref{tab:results}. We also prove Conjecture~\ref{conj:large_genus} when
$\gamma = 2$; see Theorem~\ref{thm:hyperelliptic_asymptotic}. We quickly handle
curves of genus~3 in \S\ref{sec:genus3}.

In order to describe all curves of genus 4 or 5, we find ourselves in need of a
list of all cubic forms (genus 4) or quadratic forms (genus 5) modulo the action
of a particular orthogonal group $O(Q)$. Unfortunately, computing the orthogonal
group $O(Q)$ by a naive search as we did in \cite{Faber_Grantham_GF2} turns out
to be computationally untenable for the needs of the present paper. We sketch a
more efficient approach in \S\ref{sec:orthogonal_groups}. The idea is to view an
element $g \in O(Q)$ as a matrix of indeterminates and then observe that
equating coefficients in the relation $Q(g(x)) = Q(x)$ gives rise to a very
structured system of quadratic equations in these indeterminates.

We treat curves of genus~4 in \S\ref{sec:genus4}. The majority of our work goes
toward proving that there is no curve of genus~4 and gonality~5 over
$\FF_4$. That computation breaks into two parts: the first is a large search
implemented in C, while the second uses Sage to identify smooth curves among the
output of the first step. Finally, we address curves of genus~5 in
\S\ref{sec:genus5}. Here the bulk of our effort goes toward determining the
maximum number of points on a curve with gonality~5, and toward the
non-existence of curves with gonality~6. Viewed from a distance, these
computations were quite similar to those performed in \cite{Faber_Grantham_GF2}
in order to address curves over $\FF_2$. However, since the search spaces are so
much larger over $\FF_3$ and $\FF_4$, it was necessary to improve our algorithms
and our code. Despite these efforts, our computations still required weeks of
compute time on a multi-core machine.  We outline these algorithmic changes and
present our findings in \S\ref{sec:gonality56}.

We close with a discussion about the software used and created for this
project. For simple verification of the genus or number of rational points of
the smooth model of an algebraic curve, we used Magma \cite{magma}. For ease of
development and the ability to optimize via analysis of the source code, we used
Sage \cite{sage_8.7}. Our code is Python3 compatible, and hence will run under
Sage~9.1 as well. In order to launch many Sage jobs asynchronously, we wrote a
flexible Python3 script called \texttt{sage{\us}launcher.py} that may be of use
to other researchers. Some of our code for genus-4 computations was written in C
and depends on the FLINT library \cite{flint-2.6.0}. All of our source code can
be found at \url{https://github.com/RationalPoint/gonality}.

Throughout this article, the field $\FF_4$ will be represented as
$\FF_2[t]/(t^2+t+1)$.

%%%%%%%%%%%%%%%%%%%%%%%%%%%%%%%%%%%%%%%%%%%%%%%%%%%%%%%%%%%%%%%%%%%%%%%%%%%%%%%%
%%%%%%%%%%%%%%%%%%%%%%%%%%%%%%%%%%%%%%%%%%%%%%%%%%%%%%%%%%%%%%%%%%%%%%%%%%%%%%%%

\section{Elliptic and Hyperelliptic Curves}
\label{sec:hyperelliptic}

Let us begin with curves of genus~1. A curve $C_{/\FF_q}$ of genus~1 necessarily
has a rational point by the Weil bound: $\#C(\FF_q) \ge q + 1 - 2\sqrt{q} =
(\sqrt{q} - 1)^2 > 0$. In particular, $C$ is an elliptic curve; it is given by a
Weierstrass equation $y^2 + a_1xy + a_3y = x^3 + a_2x^2 + a_4x + a_6$ with $a_i
\in \FF_q$; and $C$ has gonality~2. We conclude that $N_q(1,2) = N_q(1)$ for all
$q$. Now we are in a position to quote a result of Serre:

\begin{theorem}[{\cite[Thm.~6.3]{Serre_Rational_Points_Book}}]
  Set $m = \lfloor 2\sqrt{q} \rfloor$. Then $N_q(1) = q + 1 + m$ except when
  $q = p^a$ for $a$ odd, $e \geq 5$, $m \equiv 0 \pmod p$, in which case
  $N_q(1) = q + m$.
\end{theorem}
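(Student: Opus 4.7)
The plan is to combine the Hasse--Weil upper bound with Waterhouse's classification of isogeny classes of elliptic curves over $\FF_q$ by Frobenius trace. For any elliptic curve $E_{/\FF_q}$ we have $\#E(\FF_q) = q + 1 - t$ where the trace of Frobenius $t$ satisfies $|t| \le 2\sqrt{q}$, which immediately gives the upper bound $N_q(1) \le q + 1 + m$. To upgrade this to equality, the integer $t = -m$ must arise as a Frobenius trace for some elliptic curve over $\FF_q$.

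The key ingredient is Waterhouse's refinement of Honda--Tate theory, which characterizes exactly which integers $t$ with $|t| \le 2\sqrt{q}$ are realized as Frobenius traces over $\FF_{p^a}$. Roughly, the allowed values are: any $t$ with $\gcd(t,p) = 1$; the supersingular trace $t = 0$; the values $t = \pm 2\sqrt{q}$ (and, in certain congruence classes of $p$, $t = \pm \sqrt{q}$) when $a$ is even; and the special supersingular values $t = \pm p^{(a+1)/2}$ when $a$ is odd and $p \in \{2,3\}$. Citing this classification is the one piece of serious machinery in the argument --- it ultimately rests on Tate's theorem describing endomorphism algebras of abelian varieties --- and everything else reduces to bookkeeping.

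I would then apply the classification to $t = -m$ case by case. If $\gcd(m,p) = 1$, we are done with $N_q(1) = q+1+m$ by clause (i). If $a$ is even, then $\sqrt{q}$ is an integer and $m = 2\sqrt{q}$, so the supersingular trace $t = -2\sqrt{q}$ realizes the Weil bound. The only remaining obstruction is $a$ odd with $p \mid m$: either $m = p^{(a+1)/2}$, in which case $p \in \{2,3\}$ and $t = -m$ is still realized as a supersingular trace (a short numerical check confirms this forces $a$ small), or $t = -m$ is not realized at all, and we must fall back to $t = -(m-1)$. Since $p \mid m$ we have $\gcd(m-1,p) = 1$, so $-(m-1)$ is realized, giving $N_q(1) = q + m$. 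The remaining combinatorial check --- verifying that this obstructed case is precisely the one with $a \ge 5$ odd and $p \mid m$ --- is elementary; the main conceptual obstacle is simply the invocation of Waterhouse's theorem.
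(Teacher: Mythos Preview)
The paper does not prove this theorem; it is quoted from Serre's book with only a citation and is used solely to read off $N_3(1)$ and $N_4(1)$. Your outline via the Hasse bound together with Waterhouse's classification of which integers occur as Frobenius traces over $\FF_{p^a}$ is exactly the standard argument, and it is how the result is established in the cited reference. The case analysis is correct; the only steps you leave implicit are the routine checks that, for $a$ odd, the equality $m = p^{(a+1)/2}$ forces $p\in\{2,3\}$ (since $p^{(a+1)/2} \le m < 2p^{a/2}$ requires $\sqrt{p}<2$) and then $a=1$, after which the obstructed case is pinned down exactly as $a$ odd, $a\ge 3$, $p\mid m$. (The ``$e\ge 5$'' in the displayed statement is evidently a typo for the exponent condition on $a$.)
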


\begin{corollary}
  \label{cor:Nx12}
  $N_3(1,2) = 7$ and $N_4(1,2) = 9$.
\end{corollary}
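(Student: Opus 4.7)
The plan is to apply Serre's theorem from the previous display directly, after noting that the identification $N_q(1,2) = N_q(1)$ has already been established in the paragraph preceding the theorem. So the corollary reduces to two arithmetic evaluations of the formula, together with a check that neither $q=3$ nor $q=4$ lies in the exceptional case.

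First, for $q=3$ I would compute $m = \lfloor 2\sqrt{3} \rfloor = 3$, since $2\sqrt{3} \approx 3.464$. To apply the main formula $N_q(1) = q+1+m$, I need to rule out the exceptional clause: writing $3 = 3^1$, we have $a=1$, which fails the condition $a \ge 5$ (reading $e$ as $a$ in the stated theorem). Hence $N_3(1) = 3 + 1 + 3 = 7$, and therefore $N_3(1,2) = 7$.

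Next, for $q=4$ I would compute $m = \lfloor 2\sqrt{4} \rfloor = 4$. Writing $4 = 2^2$, we have $a=2$, which is even, so again the exceptional clause does not apply. Hence $N_4(1) = 4 + 1 + 4 = 9$, and therefore $N_4(1,2) = 9$.

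There is no real obstacle here; the entire content is bookkeeping. The only thing to be slightly careful about is verifying that one is not in the exceptional case of Serre's theorem, but both $q=3$ and $q=4$ are so small that the hypothesis $a \ge 5$ fails trivially.
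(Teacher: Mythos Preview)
Your proposal is correct and matches the paper's intent: the corollary is stated without proof, as an immediate consequence of the preceding identification $N_q(1,2)=N_q(1)$ together with Serre's theorem, and your two arithmetic evaluations (with the check that neither $q=3$ nor $q=4$ falls into the exceptional clause) are exactly the verification the reader is expected to supply.
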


Our next task is to look at hyperelliptic curves of genus $> 1$; see
\cite[Prop.~7.4.24]{Qing_Liu_Algebraic_Geometry} for the geometry of
hyperelliptic equations. We begin by constructing two families of optimal
hyperelliptic curves parameterized by $g$ and $q$: one for odd $q$ and one for
even $q$.

If $q$ is odd and $g > \frac{q^2-2}{2}$, then we can consider the curve
\[
C_{/\FF_q} : y^2 = x^{2g+2 - q^2}(x^q - x)^q + 1.
\]
In order to see that $y^2 = P(x)$ is smooth, one checks that $\gcd(P,P') =
1$. In our case, $P'$ vanishes only at $\FF_q$-rational points, while $P$ does
not vanish there. Therefore, $C$ is a hyperelliptic curve of genus~$g$ and has
$2(q+1)$ $\FF_q$-rational points (including the two at infinity). This is
optimal: the gonality-point bound \eqref{eq:gonality-point} shows that
$N_q(g,2) \le 2(q+1)$.

If instead $q$ is even and $g \ge q - 1$, we take
\[
C_{/\FF_q} : y^2 + \big((x^q + x)x^{g + 1 - q} + 1\big)y = (x^q + x)^2.
\]
The criterion for a hyperelliptic equation $y^2 + Q(x)y = P(x)$ in
characteristic 2 to define a smooth curve is expressed as $\gcd(Q, (Q')^2P +
(P')^2) = 1$. One verifies that this holds for our curve by considering the
cases $g$ odd and $g$ even separately. Then $C$ has genus $g$ and $2(q+1)$
rational points (including the two at infinity).

Combining these two constructions, we immediately obtain

\begin{theorem}
  \label{thm:hyperelliptic_asymptotic}
  Fix a prime power $q$. Then $N_q(g,2) = 2(q+1)$ for $g \gg 0$. 
\end{theorem}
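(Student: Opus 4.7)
The plan is to observe that the theorem follows by combining the upper bound coming from the gonality-point bound with the two explicit families of hyperelliptic curves that have already been produced in the paragraphs immediately preceding the statement.

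First I would note the upper bound. Since a hyperelliptic curve of genus $\ge 2$ has gonality exactly $2$, the gonality-point bound \eqref{eq:gonality-point} immediately yields $N_q(g,2) \le 2(q+1)$ for every prime power $q$ and every $g \ge 2$. So it suffices to exhibit, for each fixed $q$ and for all sufficiently large $g$, a smooth hyperelliptic curve of genus $g$ over $\FF_q$ with exactly $2(q+1)$ rational points.

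Next I would split into two cases by the parity of $q$ and invoke the constructions already given. For $q$ odd and $g > (q^2-2)/2$, the curve $y^2 = x^{2g+2-q^2}(x^q-x)^q + 1$ is smooth (by the $\gcd(P,P')=1$ criterion recalled in the excerpt), has genus $g$, and has $2(q+1)$ rational points: each $x \in \FF_q$ yields $P(x)=1$, a nonzero square, contributing two affine points, and the even degree and square leading coefficient give two points at infinity. For $q$ even and $g \ge q - 1$, the curve $y^2 + \bigl((x^q+x)x^{g+1-q} + 1\bigr)y = (x^q+x)^2$ is smooth (by the $\gcd(Q,(Q')^2 P + (P')^2)=1$ criterion, verified by considering $g$ even and $g$ odd separately), has genus $g$, and similarly accumulates $2(q+1)$ rational points from the $\FF_q$-values of $x$ together with the two places at infinity.

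The main obstacle in this proof is not conceptual at all, since the upper bound is immediate and the constructions are given; it is the bookkeeping required to confirm smoothness and to verify the precise point counts. In particular, for the characteristic-$2$ family, one must separately check the parity of $g+1-q$ when applying the $\gcd$ criterion, and one must be a little careful at infinity to confirm that two rational places lie above $x=\infty$ rather than a single ramified or inert place. Once these routine verifications are completed, the two constructions together cover all sufficiently large $g$ for every fixed $q$, and combining with the gonality-point upper bound gives $N_q(g,2) = 2(q+1)$ as desired.
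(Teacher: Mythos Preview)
Your proposal is correct and follows exactly the paper's approach: the paper's proof is the single sentence ``Combining these two constructions, we immediately obtain'' the theorem, where the upper bound $N_q(g,2)\le 2(q+1)$ from the gonality-point inequality and the two explicit hyperelliptic families (one for $q$ odd, one for $q$ even) are precisely the ingredients set up in the preceding paragraphs. Your write-up simply spells out these ingredients with a bit more detail than the paper does.
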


Let us turn to the case $q = 3$ more specifically. The first construction yields
a smooth hyperelliptic curve of genus $g$ with 8 rational points when $g \ge 4$.
The curve $C_{/\FF_3} : y^2 = x^{2g-1}(x^3 - x) + 1$ is smooth of genus $g$
whenever $g \not\equiv 1 \pmod{6}$, and it has 8 rational points. In particular,
this applies when $g = 2,3$, and we have proved

\begin{theorem}
  \label{thm:N3g2}
  $N_3(g,2) = 8$ for $g \ge 2$.
\end{theorem}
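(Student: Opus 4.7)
The plan is to assemble the two constructions already exhibited in this section and pair them with the gonality-point bound. The upper bound $N_3(g,2) \le 2(q+1) = 8$ is immediate from \eqref{eq:gonality-point}, since any hyperelliptic curve of genus $g \ge 2$ has gonality exactly~$2$.

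For the matching lower bound, I would split on $g$ according to which of the two preceding constructions applies. When $g \ge 4$, the hypothesis $g > (q^2 - 2)/2 = 7/2$ of the first family is satisfied, so over $\FF_3$ it produces a smooth hyperelliptic curve of genus $g$ with $2(q+1) = 8$ rational points (including the two at infinity). When $g \in \{2, 3\}$, neither value lies in the excluded residue class $1 \pmod 6$, so the auxiliary curve $y^2 = x^{2g-1}(x^3 - x) + 1$ gives a smooth hyperelliptic model of genus $g$ with $8$ rational points. Together these cover every $g \ge 2$.

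The only substantive item to check is smoothness of the auxiliary family for $g = 2, 3$; I would verify the criterion $\gcd(P, P') = 1$ for $P(x) = x^{2g-1}(x^3 - x) + 1$. After reduction modulo~$3$, $P'$ collapses to a monomial in $x$, while $P(0) = 1$, so coprimality follows at once. There is no real obstacle here: the theorem is essentially the bookkeeping remark that $g \in \{2, 3\}$ and $g \ge 4$ exhaust the range, with the two explicit families covering each case.
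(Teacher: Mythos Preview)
Your proposal is correct and follows the paper's own argument essentially verbatim: the gonality-point bound gives the upper bound, the first odd-$q$ family handles $g \ge 4$, and the auxiliary curve $y^2 = x^{2g-1}(x^3-x)+1$ covers $g=2,3$. Your added verification that $P'$ reduces to a monomial mod~$3$ for $g=2,3$ (while $P(0)=1$) is a welcome detail the paper leaves implicit.
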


Finally, we treat the case $q = 4$. The second construction above gives a smooth
hyperelliptic curve with 10 rational points for each genus $g \ge 3$. 

For $q = 4$, the construction above shows that $N_4(g,2) = 10$ for $g \ge 3$. To deal with genus 2, consider the curve $y^2 + (x^3+t+1)y = x^5 +
x^2$. It is smooth with 10 rational points (including the two at infinity). Thus,

\begin{theorem}
  \label{thm:N4g2}
  $N_4(g,2) = 10$ for $g \ge 2$.
\end{theorem}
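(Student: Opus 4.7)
The plan is to combine the gonality-point bound with the two constructions already in hand. First, the bound \eqref{eq:gonality-point} applied to any hyperelliptic curve over $\FF_4$ yields $\#C(\FF_4) \le 2(q+1) = 10$, so $N_4(g,2) \le 10$ for every $g$. This gives the non-trivial inequality ``for free''; what remains is to realize the value $10$ in every genus $g \ge 2$.

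For $g \ge 3$, the second construction exhibited above Theorem~\ref{thm:hyperelliptic_asymptotic} (the even-$q$ family $y^2 + ((x^q+x)x^{g+1-q}+1)y = (x^q+x)^2$ with $q=4$) already produces a smooth hyperelliptic curve of genus $g$ with exactly $10$ rational points. So this part follows immediately from Theorem~\ref{thm:hyperelliptic_asymptotic} and its proof, without any additional work.

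The only case left is $g = 2$, for which the excerpt proposes $C \colon y^2 + (x^3+t+1)y = x^5 + x^2$ over $\FF_4 = \FF_2[t]/(t^2+t+1)$. I would verify three things. First, smoothness via the characteristic-2 criterion $\gcd\bigl(Q,\ (Q')^2 P + (P')^2\bigr) = 1$ with $Q = x^3 + t + 1$ and $P = x^5 + x^2$: here $Q' = x^2$ and $P' = x^4$, so $(Q')^2 P + (P')^2 = x^4(x^5+x^2) + x^8 = x^9 + x^8 + x^6$, and a short Euclidean computation in $\FF_4[x]$ confirms it is coprime to $x^3 + t + 1$. Second, genus: since $\deg Q = g+1 = 3$ and $\deg P = 2g+1 = 5$, the standard model has genus $2$, and the leading term of $Q$ splits at infinity, yielding two rational points there. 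Third, a direct enumeration over the five rational values of $x$ confirms that the eight affine rational points together with the two at infinity give exactly $10$.

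The main obstacle here is purely bookkeeping rather than conceptual: verifying the $\gcd$-smoothness condition and enumerating the affine rational points in $\FF_4$. Neither requires any new idea beyond what was used for Theorem~\ref{thm:hyperelliptic_asymptotic}; the case $g = 2$ just falls outside the range covered by the general construction and has to be patched by hand.
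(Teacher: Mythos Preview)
Your proposal is correct and follows the same approach as the paper: the gonality-point bound gives the upper bound, the even-$q$ construction handles $g \ge 3$, and the explicit curve $y^2 + (x^3+t+1)y = x^5 + x^2$ patches in the genus-$2$ case. You simply supply more verification detail than the paper does (the $\gcd$ check, the genus count, and the point enumeration), all of which is accurate.
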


%%%%%%%%%%%%%%%%%%%%%%%%%%%%%%%%%%%%%%%%%%%%%%%%%%%%%%%%%%%%%%%%%%%%%%%%%%%%%%%%
%%%%%%%%%%%%%%%%%%%%%%%%%%%%%%%%%%%%%%%%%%%%%%%%%%%%%%%%%%%%%%%%%%%%%%%%%%%%%%%%

\section{Curves of Genus 3}
\label{sec:genus3}

We have already dealt with hyperelliptic curves in the preceding section, so all
that remains is gonality~3 and gonality~4 \cite[Prop.~2.1]{Faber_Grantham_GF2}. A non-hyperelliptic curve of genus~3
can be realized as a smooth plane quartic via the canonical
embedding. All of our examples will be of this sort. 

\begin{theorem}
  \label{thm:N333}
  $N_3(3,3) = 10$.
\end{theorem}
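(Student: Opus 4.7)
The plan is to bracket $N_3(3,3)$ between matching upper and lower bounds, both of which lie close at hand. For the upper bound, $N_3(3,3) \le N_3(3) = 10$ follows directly from Table~\ref{tab:manypoints}, since imposing a gonality condition only shrinks the class of curves under consideration.

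For the lower bound, I would first invoke Theorem~\ref{thm:N3g2} to rule out the hyperelliptic case: any genus-$3$ curve over $\FF_3$ of gonality~$2$ has at most $8 < 10$ rational points. Hence every optimal genus-$3$ curve over $\FF_3$ is non-hyperelliptic, and so its canonical map realizes it as a smooth plane quartic in $\PP^2$. Projection from any rational point of such a quartic yields a pencil $g^1_3$, so the gonality is at most $3$; combined with non-hyperellipticity, the gonality is exactly $3$. Consequently, any curve witnessing $N_3(3) = 10$ automatically witnesses $N_3(3,3) \ge 10$.

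To make the argument constructive, I would exhibit an explicit smooth plane quartic $F(x,y,z) = 0$ over $\FF_3$ with $10$ rational points, drawn either from the sources underlying the $(q,g)=(3,3)$ entry of Table~\ref{tab:manypoints} (or from the manypoints.org database) or from a short computer search through quartic forms modulo the action of $\mathrm{PGL}_3(\FF_3)$. Smoothness of the resulting $F$ is checked via the Jacobian criterion on $F$ and its three partial derivatives, and the genus and point count can be confirmed in Magma.

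The primary ``obstacle'' here is less a mathematical one than a bookkeeping one: locating a single concrete optimal plane quartic. There is no delicate non-existence argument to run, and the equality $N_3(3,3) = 10$ is essentially forced by the known value $N_3(3) = 10$ together with the hyperelliptic bound of Theorem~\ref{thm:N3g2}.
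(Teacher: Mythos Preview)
Your proposal is correct and follows essentially the same route as the paper: bound above by $N_3(3)=10$, then exhibit a smooth plane quartic over $\FF_3$ with $10$ rational points and observe it has gonality~$3$. The paper gives Serre's explicit quartic $y^3z - yz^3 = x^4 - x^2z^2$ and cites \cite[Cor.~2.3]{Faber_Grantham_GF2} for the gonality-$3$ claim, whereas you spell out the projection-from-a-point argument yourself; your invocation of Theorem~\ref{thm:N3g2} is harmless but unnecessary once you commit to writing down a smooth plane quartic, since such a curve is automatically non-hyperelliptic.
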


\begin{proof}
  Serre \cite{Serre_points_on_curves_1983} showed that $N_3(3) \le 10$, so we
  immediately have $N_3(3,3) \le 10$. Serre also exhibited the smooth plane
  quartic
  \[
    C_{/\FF_3} : y^3z - yz^3 = x^4 - x^2z^2, 
    \]
    which has 10 rational points. A non-hyperelliptic curve of genus~3 with a rational point has gonality~3 \cite[Cor.~2.3]{Faber_Grantham_GF2}, so
    $N_3(3,3) \ge 10$.
\end{proof}

\begin{theorem}
  \label{thm:N433}
  $N_4(3,3) = 14$.
\end{theorem}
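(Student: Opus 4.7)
The plan is to follow the exact same two-step strategy as in the proof of Theorem~\ref{thm:N333}. The upper bound $N_4(3,3) \le 14$ is immediate from Table~\ref{tab:manypoints}, since any curve counted by $N_4(3,3)$ is in particular a curve of genus~3 over $\FF_4$, and $N_4(3) = 14$.

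For the matching lower bound, I would exhibit a smooth plane quartic $C_{/\FF_4}$ with $\#C(\FF_4) = 14$. Any such $C$ has genus~3 (as a smooth plane quartic) and is automatically non-hyperelliptic, since Theorem~\ref{thm:N4g2} gives $N_4(g,2) = 10 < 14$ for all $g \geq 2$. Because $C$ has a rational point (it has $14$), Corollary~2.5 of \cite{Faber_Grantham_GF2} (equivalently the citation used in Theorem~\ref{thm:N333}) then forces the gonality to be $3$, yielding $N_4(3,3) \ge 14$.

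The only substantive step is producing the explicit quartic, which is the main obstacle. My first line of attack would be to search the literature: the value $N_4(3) = 14$ is already in Table~\ref{tab:manypoints}, so an optimal model must be available in the sources cited there (for instance, the manypoints.org tables, \cite{vandersquared}, or \cite{Serre_Rational_Points_Book}). Failing a direct reference, a short computer search is feasible: plane quartics over $\FF_4$ are parameterized by $15$ coefficients in $\FF_4$, but up to the action of $\mathrm{PGL}_3(\FF_4)$ the search space collapses dramatically, and one only has to enumerate sparse or low-weight models until one with $14$ points (and no singular point, verified by checking that the quartic and its three partial derivatives have no common zero in $\PP^2(\bar\FF_4)$) is found.

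Once a concrete equation $F(x,y,z) = 0$ is in hand, the verification portion of the proof is entirely routine: check smoothness via the Jacobian criterion, and either enumerate $C(\FF_4)$ by direct substitution or confirm the count in Magma. The proof would then be a two-line statement of the upper bound followed by the exhibited quartic and an appeal to the non-hyperelliptic genus-3 gonality result.
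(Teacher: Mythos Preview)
Your proposal is correct and follows essentially the same approach as the paper: the upper bound comes from $N_4(3) = 14$ (the paper attributes this to Ihara's explicit bound), and the lower bound from an explicit smooth plane quartic with $14$ points---the paper uses Serre's example $(x+y+z)^4 + (xy+xz+yz)^2 + xyz(x+y+z) = 0$---with gonality~$3$ then forced by \cite[Cor.~2.3]{Faber_Grantham_GF2}. The citation you want is Cor.~2.3, not Cor.~2.5; and note that a smooth plane quartic is automatically non-hyperelliptic (its canonical map is an embedding), so the detour through Theorem~\ref{thm:N4g2} is unnecessary, though harmless.
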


\begin{proof}
  Ihara's explicit bound shows that $N_4(3) \le 14$ \cite[\S2]{Ihara_rational_points}, and Serre
  exhibited the smooth plane quartic
  \[
  C_{/\FF_4} : (x + y + z)^4 + (xy + xz + yz)^2 + xyz(x + y + z) = 0,
  \]
  which has 14 rational points. A non-hyperelliptic curve of genus~3 with a
  rational point has gonality~3 \cite[Cor.~2.3]{Faber_Grantham_GF2}, so we
  conclude that $N_4(3,3) = 14$.
\end{proof}

\begin{theorem}
  \label{thm:Nx34}
  $N_3(3,4) = 0$ and $N_4(3,4) = 0$.
\end{theorem}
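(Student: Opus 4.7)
The plan is to establish $N_q(3,4) = 0$ for $q \in \{3,4\}$ in two steps: first bound the supremum above by $0$ using material already in this section, and then exhibit a witness curve to show that the supremum is attained (and in particular is not $-\infty$).

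For the upper bound, any curve $C$ over $\FF_q$ of genus~$3$ and gonality~$4$ must be non-hyperelliptic, since a hyperelliptic curve has gonality~$2$. By \cite[Cor.~2.3]{Faber_Grantham_GF2}, a non-hyperelliptic genus-$3$ curve possessing a rational point already has gonality~$3$. Taking the contrapositive, $C(\FF_q) = \varnothing$, so $N_q(3,4) \le 0$.

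For the lower bound, it suffices to exhibit, for each $q=3,4$, a smooth ternary quartic form $F \in \FF_q[x,y,z]$ with no $\FF_q$-rational projective zero. The resulting plane curve is non-hyperelliptic of genus~$3$ with empty rational point set, so by the same contrapositive its gonality is $4$ rather than $3$; this gives $N_q(3,4) \ge 0$. Candidates can be found by walking through orbits of ternary quartics under the natural action of $\mathrm{PGL}_3(\FF_q)$ (only $15$ coefficients, so the search is tiny), filtering by smoothness via the Jacobian criterion and by pointlessness via direct substitution at the $13$ or $21$ points of $\PP^2(\FF_q)$.

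The main obstacle is purely computational --- producing a clean pair of pointless smooth plane quartics, rather than proving anything deep. The Weil lower bound $\#C(\FF_q) \ge q + 1 - 6\sqrt{q}$ is non-positive for $q \le 8$, so there is no numerical obstruction to pointlessness, and such examples are expected to be abundant. The final step is to record the explicit forms so a reader can independently verify the genus and the count of $\FF_q$-rational points in any computer algebra system.
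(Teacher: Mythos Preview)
Your plan is correct and matches the paper's proof in structure: bound above by $0$ via the observation that a gonality-$4$ genus-$3$ curve cannot have a rational point, then exhibit a pointless smooth plane quartic over each of $\FF_3$ and $\FF_4$ and invoke \cite[Cor.~2.3]{Faber_Grantham_GF2} to conclude gonality~$4$. The only substantive difference is that the paper does not carry out a search for the witness curves; it simply quotes the explicit pointless plane quartics already constructed by Howe, Lauter, and Top \cite{HLT}, which saves you the computational step entirely. For the upper bound the paper cites \cite[Prop.~2.1]{Faber_Grantham_GF2} (a rational point forces gonality $\le g$) rather than the contrapositive of Cor.~2.3, but the two arguments are interchangeable here.
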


\begin{proof}
  A curve of genus $g$ with a rational point has gonality at most $g$
  \cite[Prop.~2.1]{Faber_Grantham_GF2}; hence, $N_g(3,4) \le 0$.  Howe, Lauter,
  and Top \cite{HLT} produced the following examples of pointless smooth plane
  quartics:
  \begin{align*}
    C_{/\FF_3} \colon & x^4 +xyz^2 +y^4 +y^3z-yz^3 +z^4 = 0\\
    C_{/\FF_4} \colon & (x^2 +x z)^2 +t (x^2 +x z) (y^2 +y z)+(y^2 +y z)^2 +t^2 z^4 = 0.
  \end{align*}
  A non-hyperelliptic curve of genus~3 with no rational point has gonality~4
  \cite[Cor.~2.3]{Faber_Grantham_GF2}, and the theorem follows.
\end{proof}

%%%%%%%%%%%%%%%%%%%%%%%%%%%%%%%%%%%%%%%%%%%%%%%%%%%%%%%%%%%%%%%%%%%%%%%%%%%%%%%%
%%%%%%%%%%%%%%%%%%%%%%%%%%%%%%%%%%%%%%%%%%%%%%%%%%%%%%%%%%%%%%%%%%%%%%%%%%%%%%%%

\section{Interlude on the computation of orthogonal groups}
\label{sec:orthogonal_groups}

Fix a finite field $\FF_q$, and let
\[
Q(x) = \sum_{1 \leq i\leq j \leq n} c_{i,j} x_i x_j
\]
be a quadratic form in $n$ variables, where $x = (x_1, \ldots, x_n)$ and
$c_{i,j} \in \FF_q$ are not all zero. The \textbf{orthogonal group} of $Q$ is
defined to be
\[
  O(Q) = \left\{ g \in \GL_n(\FF_q) \ : \ Q(g(x)) = Q(x) \right\}.
\]
We wish to write down all of the matrices in $O(Q)$ in an efficient manner.

Let $g = (g_{i,j})$ be an $n \times n$ matrix with undetermined coefficients,
and let us impose the equality $Q(g(x)) = Q(x)$. Equating coefficients of the
quadratic monomials in the $x_i$'s on both sides gives rise to a system of
$\frac{n^2+n}{2}$ quadratic equations in the $g_{i,j}$'s:
\begin{enumerate}
\item[(a)] Equating the coefficients of the diagonal term $x_i^2$ gives the relation
  \[
    Q(g_{1i},\ldots, g_{ni}) = c_{i,i}.
    \]
\item[(b)] Write $\langle x,y\rangle = Q(x+y) - Q(x) - Q(y)$ for the associated
  bilinear form. The relation arising from the coefficients of the term $x_ix_j$
  is given by
  \[
    \langle g_{\bullet i}, g_{\bullet j} \rangle = c_{i,j}.
  \]
  In particular, it depends only on $g_{rs}$ with $s \in \{i,j\}$.
\end{enumerate}
Any invertible matrix $g = (g_{i,j})$ that satisfies properties (a) and (b) will
be an element of the orthogonal group $O(Q)$, and every element of $O(Q)$ is
obtained in this way. It follows that the output of
Algorithm~\ref{alg:ortho_group} is correct.

\begin{example}
  \label{ex:ortho4}
Let $Q(x_1,x_2,x_3,x_4) = x_1x_2 + x_3x_4 \in \FF_q[x_1,x_2,x_3,x_4]$.  If $g =
(g_{i,j})$ is a $4 \times 4$ matrix of indeterminates, then the equation
$Q(g(x_1,x_2,x_3,x_4)) = Q(x_1,x_2,x_3,x_4)$ gives rise to the following system
of 10 quadratic equations:
\begin{align*}
x_1^2:& \ g_{11}g_{21} + g_{31}g_{41} = 0 \\
x_1x_2:& \  g_{12}g_{21} + g_{11}g_{22} + g_{32}g_{41} + g_{31}g_{42} = 1 \\
x_1x_3:& \  g_{13}g_{21} + g_{11}g_{23} + g_{33}g_{41} + g_{31}g_{43} = 0 \\
x_1x_4:& \  g_{14}g_{21} + g_{11}g_{24} + g_{34}g_{41} + g_{31}g_{44} = 0 \\
x_2^2:& \  g_{12}g_{22} + g_{32}g_{42} = 0 \\
x_2x_3:& \  g_{13}g_{22} + g_{12}g_{23} + g_{33}g_{42} + g_{32}g_{43} = 0 \\
x_2x_4:& \  g_{14}g_{22} + g_{12}g_{24} + g_{34}g_{42} + g_{32}g_{44} = 0 \\
x_3^2:& \  g_{13}g_{23} + g_{33}g_{43} = 0 \\
x_3x_4:& \  g_{14}g_{23} + g_{13}g_{24} + g_{34}g_{43} + g_{33}g_{44} = 1\\
x_4^2:& \  g_{14}g_{24} + g_{34}g_{44}  = 0.
\end{align*}
\end{example}

\begin{algorithm}[ht]
  \caption{--- Compute the elements of the orthogonal group for a quadratic
    form \newline $\displaystyle Q = \sum_{1 \le i \le j \le n} c_{i,j} x_i x_j$
    over $\FF_q$.}
  \begin{algorithmic}[1]
    \STATE initialize an empty list $L$
    \FOR {$1 \le i \le n$}
    \STATE compute the set $S_i$ of solutions to the equation $Q(x) = c_{i,i}$
    \ENDFOR 
  \FOR {$(g_{11}, \ldots, g_{n1}) \in S_1$}
  \FOR {$(g_{12}, \ldots, g_{n2}) \in S_2$}
  \STATE \textbf{if} $\langle g_{\bullet 1}, g_{\bullet 2} \rangle \ne c_{1,2}$ \textbf{then} continue
  \FOR {$(g_{13}, \ldots, g_{n3}) \in S_3$}
  \STATE \textbf{if} $\langle g_{\bullet 1}, g_{\bullet 3} \rangle \ne c_{1,3}$ \textbf{then} continue
  \STATE \textbf{if} $\langle g_{\bullet 2}, g_{\bullet 3} \rangle \ne c_{2,3}$ \textbf{then} continue \newline
  $\vdots$
  \FOR {$(g_{1n}, \ldots, g_{nn}) \in S_n$}  
  \STATE \textbf{if} $\langle g_{\bullet i}, g_{\bullet n} \rangle \ne c_{i,n}$ for some $i < n$  \textbf{then} continue
  \STATE \textbf{if} $g = (g_{i,j})$ is invertible \textbf{then} append $g$ to the list $L$
  \ENDFOR 
  \newline \vdots
  \ENDFOR
  \ENDFOR
  \ENDFOR
  \STATE \textbf{return} L
\end{algorithmic}
  \label{alg:ortho_group}
\end{algorithm}

Algorithm~\ref{alg:ortho_group} begins with the precomputation of the sets $S_i
\subset \FF_q^n$; for example, one can simply loop over the elements of
$\FF_q^n$ to find solutions. By the Weil conjectures, one expects $\#S_i \approx
q^{n-1}$, so the overall search space has size approximately
$q^{n(n-1)}$. However, the checks involving the associated bilinear form yield
a substantial cutdown of the set of matrices  to test for invertibility.
    
\begin{example}
  Consider the quadratic form $Q = x_1x_2 + x_3^2 \in
  \FF_2[x_1,x_2,x_3,x_4,x_5]$. Computing $O(Q)$ via a naive loop over all $5
  \times 5$ matrices with coefficients in $\FF_2$ takes approximately 54 minutes
  in Sage on a 2.6GHz Intel Core i5 with 16GB RAM. If we instead apply
  Algorithm~\ref{alg:ortho_group}, we first precompute the sets $S_i$. Note that
  $S_1 = S_2 = S_4 = S_5$, and that $\#S_i = 16$ for all $i$. It follows that
  our search space has size $(2^4)^5$ instead of $2^{25}$, and we only test
  around $2^{14.6}$ matrices for invertibility. Our Sage implementation produced
  $O(Q)$ in less than a second.
\end{example}

%% \begin{remark}
%%   There is a certain amount of apples-to-oranges comparison happening in the
%%   preceding example. Matrix construction and polynomial evaluation in Sage
%%   dominate the runtime using the algorithm in \cite{Faber_Grantham_GF2}. After
%%   correcting for this somewhat, we still find that
%%   Algorithm~\ref{alg:ortho_group} is substantially more efficient.
%% \end{remark}

Here is another practical improvement for computing some orthogonal
groups. Suppose that $Q$ is a quadratic form in $\FF_q[x_1, \ldots, x_n]$, but
there is $m < n$ such that $x_{m+1}, \ldots, x_n$ do not appear in any monomial
with nonzero coefficient. Write $\FF_q^n = U \oplus V$, where $U$ is spanned by
the first $m$ standard basis vectors and $V$ is spanned by the remaining
$n-m$. Then Lemma~B.11 of the arXiv edition of \cite{Faber_Grantham_GF2} implies
$g \in O(Q)$ if and only if it has the form
  \[
    g = \begin{pmatrix} A & 0 \\ B & C \end{pmatrix},
   \]
where $A \in O(Q|_U)$, $B : U \to V$ is an arbitrary linear map, and $C \in
\GL(V)$. In particular, this means that we can perform the search in
Algorithm~\ref{alg:ortho_group} on a quadratic form in $m$ variables and then
build up the orthogonal group of $Q$.
%% An implementation of this approach to computing $O(Q)$ as in
%% Example~\ref{ex:ortho4} took less than $1/4$ of a second to complete.

\begin{remark}
  How does one verify that Algorithm~\ref{alg:ortho_group} has been implemented
  correctly? We provide strong evidence via two methods. First, we have several
  implementations, including a naive search over all matrices and the method
  from Appendix~B.2 of the arXiv edition of \cite{Faber_Grantham_GF2}. These can
  be applied to quadratic forms in very low dimension over small finite fields
  ($q\le 4$) to vet the code. Second, we can verify that the cardinality of our
  output is correct because L.E. Dickson computed the order of $O(Q)$ well over
  a century ago. See Chapters~VII and~VIII of \cite{Dickson_Linear_Groups_1901}.
\end{remark}

%%%%%%%%%%%%%%%%%%%%%%%%%%%%%%%%%%%%%%%%%%%%%%%%%%%%%%%%%%%%%%%%%%%%%%%%%%%%%%%%
%%%%%%%%%%%%%%%%%%%%%%%%%%%%%%%%%%%%%%%%%%%%%%%%%%%%%%%%%%%%%%%%%%%%%%%%%%%%%%%%

\section{Curves of Genus~4}
\label{sec:genus4}

Hyperelliptic curves of genus~4 were treated in Theorems~\ref{thm:N3g2}
and~\ref{thm:N4g2}. It remains to handle curves of gonality 3, 4, and 5
\cite[Prop.~2.1]{Faber_Grantham_GF2}. We easily produce upper and lower bounds
for curves over $\FF_3$, so we do these first. Afterward, we treat curves over
$\FF_4$.

%%%%%%%%%%%%%%%%%%%%%%%%%%%%%%%%%%%%%%%%%%%%%%%%%%%%%%%%%%%%%%%%%%%%%%%%%%%%%%%%

\subsection{Ternary curves}

\begin{theorem}
  \label{thm:N343}
$N_3(4,3) = 12$.
\end{theorem}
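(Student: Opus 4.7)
The plan is to sandwich $N_3(4,3)$ between matching bounds, in the same spirit as Theorems~\ref{thm:N333} and~\ref{thm:N433}.

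For the upper bound, $N_3(4,3) \le N_3(4) = 12$ is immediate from Table~\ref{tab:manypoints}.

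For the lower bound, I need to exhibit a curve of genus~$4$ over $\FF_3$ with $12$ rational points whose gonality is exactly $3$. Any such candidate is automatically non-hyperelliptic, since $12 > 2(q+1) = 8$ violates the gonality-point bound \eqref{eq:gonality-point} for gonality~$2$. A non-hyperelliptic genus-$4$ curve has geometric gonality~$3$, so its gonality over $\FF_3$ is either $3$ or $4$; it equals~$3$ precisely when the canonical $g^1_3$ descends, equivalently when the unique quadric surface containing the canonical embedding in $\PP^3$ carries an $\FF_3$-rational ruling.

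A convenient model that builds an $\FF_3$-ruling into the construction is a smooth divisor of bidegree $(3,3)$ in $\PP^1 \times \PP^1$: adjunction gives arithmetic genus $(3-1)(3-1) = 4$, and each of the two projections is a degree-$3$ $\FF_3$-rational map to $\PP^1$. Since $(\PP^1 \times \PP^1)(\FF_3)$ has $16$ points and each fiber of a projection over an $\FF_3$-point is a binary cubic in the other variable with at most $3$ rational roots, such a curve has at most $4 \cdot 3 = 12$ rational points, with equality exactly when every rational fiber splits completely. So the task reduces to producing a bihomogeneous $f(x_0,x_1;y_0,y_1) \in \FF_3[x_0,x_1;y_0,y_1]$ of bidegree $(3,3)$ such that (i) $V(f)$ is smooth, and (ii) for each of the four $\FF_3$-rational values of $[x_0 : x_1]$, the resulting binary cubic in $(y_0,y_1)$ splits completely over $\FF_3$.

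The expected obstacle is mild. Both conditions (i) and (ii) cut out constructible subsets of the $16$-dimensional coefficient space of bihomogeneous bidegree-$(3,3)$ forms over $\FF_3$, and each is of positive density, so a short computer search will produce a suitable $f$. Alternatively, one can import any known optimal curve of genus~$4$ over $\FF_3$ from \cite{vandersquared} (or the manypoints tables) and verify trigonality directly by exhibiting a degree-$3$ map to $\PP^1$, just as was done with specific plane quartics in the proofs of Theorems~\ref{thm:N333} and~\ref{thm:N433}.
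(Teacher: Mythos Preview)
Your outline is sound and matches the paper's architecture: the upper bound $N_3(4,3)\le N_3(4)=12$ is exactly what the paper uses (citing Serre via Oesterl\'e's method), and the lower bound is to be supplied by an explicit trigonal genus-$4$ curve over $\FF_3$ with $12$ points. Your geometric analysis---that such a curve is automatically non-hyperelliptic by the gonality-point bound, and that a smooth $(3,3)$-divisor on $\PP^1\times\PP^1$ has genus~$4$ and visible $\FF_3$-rational $g^1_3$'s---is correct.

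The gap is that you never actually produce the witness. Saying ``a short computer search will produce a suitable $f$'' or ``one can import any known optimal curve \ldots\ and verify trigonality'' is a plan, not a proof; the theorem is not established until an explicit curve is on the page and its properties checked. The paper follows precisely your second alternative: it quotes the Niederreiter--Xing curve
\[
y^3 - y \;=\; \frac{x^3 - x}{(x^2+1)^2},
\]
notes that its smooth model has genus~$4$ and $12$ rational points, observes that $x$ gives a degree-$3$ map to $\PP^1$, and rules out gonality~$2$ by the gonality-point bound (exactly as you argued). Your $(3,3)$-divisor framing is a pleasant alternative packaging---it makes the trigonality manifest from both projections rather than just one---but to complete the proof along that route you must still write down a specific smooth bidegree-$(3,3)$ form over $\FF_3$ whose four rational $x$-fibres each split into three distinct $\FF_3$-points.
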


\begin{proof}
  Using Oesterl\'e's method, Serre showed that $N_3(4) = 12$, which immediately
  implies that $N_3(4,3) \le 12$. Using global class field theory, Niederreiter
  and Xing \cite{Niederreiter_Xing_cyclotomic} found the following example of an
  affine plane curve whose smooth model has genus 4 and 12 rational points:
  \[
     (y^3-y) = \frac{x^3-x}{(x^2+1)^2}.
  \]
  The rational function $x$ gives a map to $\PP^1$ of degree 3, and this curve
  would violate the gonality-point bound if it were hyperelliptic. Thus,
  $N_3(4,3) \ge 12$.
\end{proof}

By Lemma~5.1 of \cite{Faber_Grantham_GF2}, a curve of genus~4 and gonality~4
or~5 can be realized in $\PP^3 = \FF_3[x,y,z,w]$ as the intersection of a cubic
surface and the quadric surface
\[
V(Q) \colon xy + z^2 + w^2 = 0,
\]
and conversely, any smooth geometrically irreducible intersection of $V(Q)$ with
a cubic surface has genus~4 and gonality at least~4.

\begin{theorem}
  \label{thm:N344}
  $N_3(4,4) = 10$.
\end{theorem}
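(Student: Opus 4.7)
The plan is to establish matching bounds $N_3(4,4) \le 10$ and $N_3(4,4) \ge 10$. The starting point is the trivial inequality $N_3(4,4) \le N_3(4) = 12$, which already reduces the task to ruling out point counts of $11$ and $12$, and to producing one explicit curve of genus four, gonality four with exactly ten $\FF_3$-rational points.

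For the lower bound I would exhibit a single cubic $F \in \FF_3[x,y,z,w]_3$ for which $C = V(Q) \cap V(F)$ is smooth and geometrically irreducible with $\#C(\FF_3) = 10$. The converse half of \cite[Lem.~5.1]{Faber_Grantham_GF2} immediately supplies genus four and gonality at least four, and because $z^2 + w^2$ is anisotropic over $\FF_3$ the quadric $V(Q)$ has no rulings defined over $\FF_3$, so no $g^1_3$ can arise from a ruling; it remains to verify the gonality is not five by exhibiting a single effective degree-four divisor moving in a pencil (for instance, the pullback of a hyperplane section of $V(Q)$ after projection from an $\FF_3$-point of $C$). Candidate cubics $F$ can be found by a short random search over sparse forms, or adapted from the published tables of ternary curves with many points.

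For the upper bound, assume $C$ is a curve of genus four and gonality four over $\FF_3$; by \cite[Lem.~5.1]{Faber_Grantham_GF2}, write $C = V(Q) \cap V(F)$ for some cubic form $F$. Two cubics $F$ and $F'$ produce isomorphic curves whenever they represent the same orbit of $O(Q) \times \FF_3^{\times}$ on the quotient space $\FF_3[x,y,z,w]_3 / Q\cdot \FF_3[x,y,z,w]_1$, which has dimension $20 - 4 = 16$. The plan is to enumerate orbit representatives $F$, discard those whose intersection with $V(Q)$ is singular or geometrically reducible, sort the survivors by gonality (four versus five), and read off the maximum point count among the gonality-four survivors.

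The chief obstacle is the enumeration itself: roughly $3^{16} \approx 4.3\cdot 10^7$ cubics modulo $Q$ must be surveyed before dividing by the $O(Q)$-action. The main enabling ingredient is Algorithm~\ref{alg:ortho_group}, which makes $O(Q)$ computationally accessible and so makes orbit reduction feasible; further speedups come from choosing normal forms for the leading monomials of $F$ and from rejecting non-smooth or low-point intersections early. A secondary subtlety is separating gonality-four from gonality-five outputs, which can be done uniformly by searching each candidate for a degree-four divisor with $h^0 \ge 2$. This computation is a scaled-down cousin of the $\FF_4$ search that occupies the bulk of \S\ref{sec:genus4}, and for $q = 3$ it should fit comfortably within the framework developed in Section~\ref{sec:orthogonal_groups}.
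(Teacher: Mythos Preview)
Your upper-bound strategy is correct in principle but vastly overcomplicated; you have missed the one-line argument the paper uses. Every genus-$4$, gonality-$4$ curve over $\FF_3$ sits on the quadric $V(Q)$ with $Q = xy + z^2 + w^2$, and this quadric has exactly $10$ rational points (it is the non-split quadric in $\PP^3$, so $\#V(Q)(\FF_3) = q^2 + 1 = 10$). Hence $\#C(\FF_3) \le 10$ immediately---no enumeration of cubics, no $O(Q)$-orbit reduction, no separate exclusion of $11$ or $12$ points. Your proposed search over $\sim 3^{16}$ cubics would eventually reach the same conclusion, but it trades a single point-count for a substantial computation.

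For the lower bound your plan is essentially the paper's, but the gonality verification is simpler than you suggest. Once $C = V(Q)\cap V(F)$ is smooth and geometrically irreducible (hence genus~$4$ and gonality $\ge 4$), the presence of any $\FF_3$-rational point on $C$ already forces gonality $\le 4$ by \cite[Cor.~2.4]{Faber_Grantham_GF2} (equivalently, a curve of genus~$g$ with a rational point has gonality at most~$g$, \cite[Prop.~2.1]{Faber_Grantham_GF2}). There is no need to build a pencil by hand or to separate gonality~$4$ from gonality~$5$ by a divisor search. The paper simply exhibits one cubic passing through all ten points of $V(Q)$, checks that the intersection is smooth and geometrically irreducible, and concludes.
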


\begin{proof}
  The surface $Q = xy + z^2 + w^2 = 0$ has 10 rational points on it, so we
  must have $N_3(4,4) \le 10$. The cubic surface with equation
  \[
    x^2y - xyz - y^2z + xz^2 + x^2w + y^2w + xw^2 - zw^2 + w^3 = 0,
    \]
  passes through all 10 of those rational points, and its intersection with
  $V(Q)$ is smooth and geometrically irreducible. Since this curve has a
  rational point, it must have gonality exactly 4 by
  \cite[Cor.~2.4]{Faber_Grantham_GF2}. Therefore, $N_3(4,4) \ge 10$.
\end{proof}

\begin{theorem}
  \label{thm:N345}
  $N_3(4,5) = 0$.
\end{theorem}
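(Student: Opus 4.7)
The plan is to parallel the proof of Theorem~\ref{thm:N344}. The upper bound $N_3(4,5) \le 0$ is immediate: any curve of genus $g$ with a rational point has gonality at most $g$ \cite[Prop.~2.1]{Faber_Grantham_GF2}, so a genus-4 curve of gonality~5 must be pointless. What remains is to exhibit one such curve, in order to show $N_3(4,5) = 0$ rather than $-\infty$.

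For the construction, I would use the characterization stated just before Theorem~\ref{thm:N344}: any curve of genus~4 and gonality at least~4 over $\FF_3$ is cut out in $\PP^3$ by the quadric $Q = xy + z^2 + w^2$ together with a cubic form $F \in \FF_3[x,y,z,w]$, with the intersection smooth and geometrically irreducible. Combining this with \cite[Cor.~2.4]{Faber_Grantham_GF2} (applied in Theorem~\ref{thm:N344} to force gonality~4 when a rational point exists), such a curve has gonality~5 precisely when it has no $\FF_3$-rational point. The task therefore reduces to exhibiting a cubic $F$ with two properties: (i) $V(Q) \cap V(F)$ is smooth and geometrically irreducible in $\PP^3$, which automatically pins down the genus as~4 via adjunction for complete intersections; and (ii) $F$ does not vanish at any of the ten rational points of $V(Q)$.

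To find such an $F$, I would enumerate cubic forms modulo the action of the orthogonal group $O(Q)$, since $O(Q)$ acts on cubic forms and $O(Q)$-equivalent cubics produce isomorphic curves---this is precisely the motivation for the machinery in \S\ref{sec:orthogonal_groups}. A first pass discards candidates that fail the cheap condition~(ii), which amounts to evaluating $F$ at ten fixed points; for each surviving representative, condition~(i) is verified by computing the Jacobian rank of $(Q,F)$ at the points of intersection over $\bar\FF_3$ and by checking irreducibility of the intersection ideal in Magma or Sage. The main obstacle is computational rather than theoretical: the naive search space of cubic forms in four variables already has size $3^{20}$, so the quotient by $O(Q)$ (and hence efficient computation of $O(Q)$ via Algorithm~\ref{alg:ortho_group}) is essential. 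Once a single $F$ passing both tests has been produced, the theorem follows immediately. Unlike the non-existence results elsewhere in the paper, this is an existence statement that only needs one witness, so I do not expect any serious arithmetic-geometric obstruction beyond conducting the search.
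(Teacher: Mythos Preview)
Your upper bound is correct, and the general strategy of exhibiting a smooth complete intersection $V(Q)\cap V(F)$ with $Q = xy+z^2+w^2$ is exactly right. However, your criterion for forcing gonality~5 is wrong, and this is a genuine gap.

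You write that a genus-4 curve on the anisotropic quadric ``has gonality~5 precisely when it has no $\FF_3$-rational point,'' citing \cite[Cor.~2.4]{Faber_Grantham_GF2} as used in Theorem~\ref{thm:N344}. But that application only gives one direction: a rational point forces gonality~4. The converse you are using is false. The actual content of \cite[Cor.~2.4]{Faber_Grantham_GF2} in this setting is that gonality~5 is equivalent to $C(\FF_{q^2}) = \varnothing$, not $C(\FF_q) = \varnothing$. (Compare how the paper handles $N_4(4,5)$: the search is for cubics avoiding every $\FF_{16}$-point of $V(Q)$, not every $\FF_4$-point.) A pointless curve over $\FF_3$ can still acquire a quadratic point and hence a $g^1_4$, landing it at gonality~4. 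So your condition~(ii) must be strengthened to ``$F$ does not vanish at any $\FF_9$-point of $V(Q)$,'' and your point-count and search must be run over $\FF_9$ rather than $\FF_3$. Once you make this correction the argument goes through; indeed the paper's own proof simply writes down one explicit cubic $F$, verifies smoothness and geometric irreducibility, and checks directly that the intersection has no $\FF_9$-point. The systematic $O(Q)$-orbit enumeration you describe is sound but more machinery than is needed for an existence statement.
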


\begin{proof}
  A curve of genus~$g$ with a rational point has gonality at most $g$ by
  \cite[Prop.~2.1]{Faber_Grantham_GF2}. It follows that $N_3(4,5) \le
  0$. Consider the curve in $\PP^3 = \Proj \FF_3[x,y,z,w]$ cut out by the
  following equations:
  \begin{align*}
    xy + z^2 + w^2 &= 0 \\
    x^3 + y^3 + y^2z + x^2w + xyw - y^2w - yzw + z^2w &= 0.
    % This used to say x^3 + y^3 + yz^2 + xw^2 - yw^2 - zw^2&= 0, which is wrong.
  \end{align*}
  It is smooth and geometrically irreducible, hence of genus 4 and gonality at
  least 4. A direct search shows that it has no $\FF_9$-rational point, so it
  must have gonality~5 \cite[Cor.~2.4]{Faber_Grantham_GF2}.
\end{proof}

\begin{remark}
  Our practice has been to look in the literature for examples of the curves we
  need before turning to computer searches. Castryck and Tuitman seem to have
  given the first example of a curve with gonality~5 over $\FF_3$
  \cite[p.15]{Castryck-Tuitman-arXiv-v2}. (The cited arXiv paper contains
  examples that were excised before publication as \cite{Castryck-Tuitman}.)  We
  opted to use our own example in the above proof because it has somewhat
  shorter defining polynomials.
\end{remark}

%%%%%%%%%%%%%%%%%%%%%%%%%%%%%%%%%%%%%%%%%%%%%%%%%%%%%%%%%%%%%%%%%%%%%%%%%%%%%%%%

\subsection{Quaternary curves}

A non-hyperelliptic curve of genus~4 over $\FF_4$ can be realized in $\PP^3 =
\FF_4[x,y,z,w]$ as the intersection of a cubic surface and one of the following
quadric surfaces:
\begin{align*}
  \text{Gonality 3: }& xy + z^2 = 0;\\
  \text{Gonality 3: }& xy + zw = 0; \text{ or}\\
  \text{Gonality 4 or 5: }& xy + z^2 + tzw + w^2 = 0.
\end{align*}
See Lemma~5.1 of \cite{Faber_Grantham_GF2}. 

\begin{theorem}
  \label{thm:N443}
  $N_4(4,3) = 15$.
\end{theorem}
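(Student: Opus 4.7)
The plan is to mimic the pattern of Theorems~\ref{thm:N343} and~\ref{thm:N433}: get the upper bound from Table~\ref{tab:manypoints} and produce an explicit example for the lower bound. Concretely, $N_4(4,3) \le N_4(4) = 15$ is recorded in Table~\ref{tab:manypoints} (attributable to Serre's refinement of the Weil--Oesterl\'e argument), so the work lies entirely in exhibiting a curve of genus~$4$ over $\FF_4$ with gonality~$3$ and $15$ rational points.

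The classification just stated makes the construction strategy clear: any smooth, geometrically irreducible complete intersection in $\PP^3_{/\FF_4}$ of a cubic surface with one of the two gonality-$3$ quadrics --- say the split smooth quadric $Q = xy + zw$ --- has genus~$4$ and gonality at most~$3$ by Lemma~5.1 of \cite{Faber_Grantham_GF2}. Moreover, since $N_4(4,2) = 10$ by Theorem~\ref{thm:N4g2}, such a curve cannot be hyperelliptic once it carries more than $10$ rational points; so if I can find a cubic surface $V(F)$ whose intersection with $V(Q)$ is smooth, geometrically irreducible, and has $15$ rational points, the gonality is automatically exactly~$3$.

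The concrete step is to produce the cubic. The split quadric $V(xy+zw) \subset \PP^3_{/\FF_4}$ is isomorphic to $\PP^1 \times \PP^1$ over $\FF_4$ and therefore carries $(4+1)^2 = 25$ rational points; a cubic surface pulls back to a bidegree-$(3,3)$ form, and I would search among such forms that vanish on $15$ of these $25$ points, testing smoothness and geometric irreducibility with the Jacobian criterion and polynomial factorization as in Section~\ref{sec:genus3}. Equivalently, because $N_4(4) = 15$ is known to be attained, one may lift an existing optimal genus-$4$ curve over $\FF_4$ (from the Niederreiter--Xing constructions via class field theory, the \texttt{manypoints.org} tables, or Serre's work) to $\PP^3$ via its canonical embedding and verify that the unique quadric in its ideal is split.

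The main obstacle I anticipate is the mechanical one of locating an explicit cubic meeting all conditions simultaneously: checking smoothness, irreducibility, and point count for any individual candidate is routine in Magma or Sage, but most cubics on the quadric will fail to reach $15$ rational points or will have a singular intersection with $V(Q)$, so a modest computational search (or a careful lift of a known optimal curve) is needed to exhibit the example.
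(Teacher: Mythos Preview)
Your proposal is correct and follows essentially the same route as the paper: the upper bound comes from Serre's $N_4(4)=15$, and the lower bound from an explicit smooth quadric--cubic intersection in $\PP^3$ with $15$ rational points. The only cosmetic differences are that the paper places its example on the singular cone $xy+z^2=0$ rather than the split quadric $xy+zw$, supplying the cubic $x^3 + xyz + ty^2w + (t+1)yw^2 + w^3$ explicitly, and that your supplementary check $15>10=N_4(4,2)$ is unnecessary since any smooth complete intersection of this shape is canonically embedded and hence already non-hyperelliptic.
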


\begin{proof}
  Serre showed that $N_4(4) = 15$, so it will suffice to find a curve of
  gonality~3 with 15 rational points. Consider the curve in $\PP^3$ cut out by
  the equations
  \begin{align*}
    xy + z^2 &= 0 \\
    x^3 + xyz + ty^2w + (t+1)yw^2 + w^3 &= 0.
  \end{align*}
  It is smooth, hence of genus~4 and gonality~3 by the remarks at the beginning
  of this section.  One verifies by direct search that it has 15 rational
  points.
\end{proof}

%% \begin{remark}
%%   A number of trigonal curves of genus 4 over $\FF_4$ with 15 rational points
%%   exist in the literature, but none of them visibly has a morphism to $\PP^1$ of
%%   degree~3. See, e.g., \url{manypoints.org}. 
%% \end{remark}

%% To handle curves of gonality~4, the remarks at the beginning of this section
%% show that we should focus on curves that lie on the quadric surface
%% \[
%% Q = xy + z^2 + tzw + w^2 = 0.
%% \]
%% This surface has 17 rational points, but Serre showed that a curve of genus~4
%% over $\FF_4$ has at most 15 rational points. It turns out that a curve of
%% gonality~4 cannot have even this many. 

\begin{lemma}
  \label{lem:N444_13}
  $N_4(4,4) \le 13$.
\end{lemma}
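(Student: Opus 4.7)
The plan is to exploit the canonical embedding of $C$ in the non-split quadric $V(Q)$ with $Q = xy + z^2 + tzw + w^2$. This quadric is isomorphic to the Weil restriction $\mathrm{Res}_{\FF_{16}/\FF_4} \PP^1$ and therefore has exactly $q^2 + 1 = 17$ rational points over $\FF_4$, giving the naive bound $\#C(\FF_4) \le 17$. The task is to improve this to $13$.

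My first step is to analyze hyperplane sections of $V(Q)$. At each rational point $P \in V(Q)(\FF_4)$, the tangent hyperplane $T_P V(Q)$ cuts $V(Q)$ in two Frobenius-conjugate lines meeting only at $P$; because Frobenius swaps the two rulings of the non-split quadric, these lines are not individually $\FF_4$-rational, so the only rational point of $T_P V(Q) \cap V(Q)$ is $P$ itself. Any other rational hyperplane cuts $V(Q)$ in a smooth conic carrying $q + 1 = 5$ rational points. Now fix $P \in C(\FF_4)$ and consider the $g^1_4$ pencil $|K - 2P|$; this is a genuine $g^1_4$ because $C$ is non-hyperelliptic, so $h^0(2P) = 1$ and Riemann--Roch yields $h^0(K - 2P) = 2$. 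This pencil corresponds to the $5$ rational hyperplanes in $\PP^3$ containing the tangent line $T_P C \subset T_P V(Q)$. Among these: $T_P V(Q)$ contributes a fiber whose support has no rational points other than $P$; the unique osculating hyperplane at $P$ contributes at most $4$ rational points (namely $P$ plus the residual degree-$3$ divisor on the smooth conic); and each of the remaining three generic hyperplanes contributes at most $4$ rational points. Summing across fibers (which partition $C(\FF_4)$) gives the intermediate bound $\#C(\FF_4) \le 16$.

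The main obstacle is closing the remaining gap from $16$ down to $13$. I expect this requires either (i) a Lauter--Serre style explicit-formula argument that combines the gonality-point bound $\#C(\FF_{16}) \le 3 \cdot 17 = 51$ (arising from trigonality of $C$ over $\FF_{16}$, since the two $\FF_{16}$-rational $g^1_3$'s are swapped by $\mathrm{Gal}(\FF_{16}/\FF_4)$) with Serre's real Weil-polynomial inequalities to rule out $\#C(\FF_4) \in \{14, 15, 16\}$, or (ii) an exhaustive search over cubic surfaces modulo the action of the orthogonal group $O(Q)$ computed in Section~\ref{sec:orthogonal_groups}. Given the substantial enumeration machinery the paper has already developed for orthogonal-group-equivariant counting, I anticipate approach (ii) is the more natural route here; the $O(Q)$-orbit reduction should make the search over cubics cutting out many rational points on $V(Q)$ tractable, and a direct verification can then eliminate the remaining candidates.
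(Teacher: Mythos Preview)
Your proposal is not a complete proof: you reach $\#C(\FF_4)\le 16$ by a pleasant fiberwise count on the pencil $|K-2P|$, and then you speculate about two possible ways to close the gap without carrying either one out. The paper's proof is entirely computational and uses neither your geometric warm-up nor either of your two suggested completions in the form you describe.

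Concretely, the paper first invokes Serre's result $N_4(4)=15$, so only the possibilities $\#C(\FF_4)\in\{14,15\}$ need to be excluded; your bound of $16$ is therefore not needed. The key observation is then a dimension count: the space of cubic forms in $\PP^3$ has projective dimension $19$, but modulo multiples of $Q$ one may kill the $x^2y$-, $xy^2$-, $z^3$-, and $w^3$-coefficients, leaving a $15$-dimensional projective space of ``reduced'' cubics. Requiring such a cubic to vanish at $15$ specified points of $V(Q)(\FF_4)$ is $15$ linear conditions, so one expects (and finds) a unique cubic for each of the $\binom{17}{15}=136$ subsets. A Sage loop checks that none of these $136$ cubics meets $V(Q)$ in a smooth curve. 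The same procedure on the $\binom{17}{14}=680$ fourteen-point subsets finishes the argument in a few minutes. No $O(Q)$-orbit reduction is used for this lemma; that machinery is reserved for the gonality-$5$ and genus-$5$ searches where the relevant spaces are genuinely large.

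Your suggestion (i) is unlikely to succeed on its own: the constraint $\#C(\FF_{16})\le 51$ together with $\sum x_j=-9$ (for $\#C(\FF_4)=14$) and $x_j\in[-4,4]$ does not produce a contradiction at the level of real Weil polynomials, since curves of genus~$4$ over $\FF_4$ with $14$ points certainly exist (just not on the non-split quadric). Your suggestion (ii) would work in principle, but the paper's point-subset approach is both simpler and faster here because the number of target points is so close to the dimension of the space of cubics.
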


\begin{proof}
Let $C$ be a curve of genus 4 and gonality 4 over $\FF_4$, which we may realize
as the intersection of $Q = xy + z^2 + tzw + w^2 = 0$ and a cubic surface in
$\PP^3$.  Serre showed that $N_4(4) = 15$, so in order to prove the lemma, we
must rule out the possibility that a cubic surface meets $V(Q)$ smoothly and passes
through 14 or 15 rational points of $V(Q)$. Note that $V(Q)$ has 17 rational points. 

The space of cubic surfaces has (projective) dimension 19. If $F$ is a cubic
form, the ideal generated by $F$ and $Q$ is not affected by replacing $F$ with
$F + LQ$ for any linear form $L$. Consequently, by adding a suitable constant
multiple of $xQ$, we may assume that $F$ has no $x^2y$-term. Similarly, we may
kill the $xy^2$-, $z^3$-, and $w^3$-terms by adding suitable multiples of $yQ$,
$zQ$, and $wQ$, respectively. In this way, we reduce the dimension of the space of
cubic surfaces under consideration down to $15$.

Let us now rule out the possibility of a curve of gonality~4 with 15 rational
points. Insisting that a cubic surface pass through a particular rational point
is a linear condition on the coefficients of the cubic's defining
polynomial. For each choice of 15 points on $V(Q)$ --- of which there are
$\binom{17}{15} = 136$ --- we use linear algebra to find a basis for the space
of cubic surfaces that vanish at all of the points. From a naive
dimension-count, we expect the resulting space to have (projective) dimension~0,
so there is a unique such cubic surface. In fact, this turns out to be the case:
we executed this procedure in Sage and determined that none of the resulting 136
cubic surfaces meet the quadric surface $V(Q)$ in a smooth curve. The resulting
computation took under a minute on a single 2.6Ghz Intel Core i5 CPU.

A similar computation applied to the $\binom{17}{14} = 680$ choices of 14
rational points on $V(Q)$ took approximately 4.5 minutes to verify that there is no
curve of genus 4 and gonality 4 with 14 rational points.
\end{proof}

\begin{theorem}
  \label{thm:N444}
  $N_4(4,4) = 13$.
\end{theorem}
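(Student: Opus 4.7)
The plan is to prove the matching lower bound by exhibiting an explicit curve. Lemma~\ref{lem:N444_13} already gives $N_4(4,4) \le 13$, so it suffices to construct a single curve of genus~4 and gonality~4 over $\FF_4$ with exactly 13 rational points.

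Following the setup at the start of this subsection, I would realize the curve as a smooth geometrically irreducible intersection of the ``gonality 4 or 5'' quadric $V(Q) \colon xy + z^2 + tzw + w^2 = 0$ with a cubic surface $V(F)$ in $\PP^3$. The quadric $V(Q)$ has 17 rational points. For each of the $\binom{17}{13} = 2380$ choices of 13 rational points on $V(Q)$, I would use linear algebra to compute a basis for the space of cubic forms vanishing at all 13 points, working inside the 15-dimensional (projective) reduced space of cubics identified in the proof of Lemma~\ref{lem:N444_13}. Imposing 13 linear conditions on this space leaves a linear system of expected projective dimension at least~2, so there is ample room to search for suitable cubics.

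For each candidate $F$ in the resulting linear system, I would check three things: (i) $V(F) \cap V(Q)$ is smooth, (ii) it is geometrically irreducible, and (iii) the four remaining rational points of $V(Q)$ do not lie on $V(F)$, so that the intersection has exactly 13 rational points. Smoothness and irreducibility force the intersection to be a curve of genus~4 with gonality at least~4, by the classification at the beginning of the subsection. Since the curve has a rational point, \cite[Cor.~2.4]{Faber_Grantham_GF2} gives the reverse inequality, so the gonality is exactly~4.

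The main obstacle will be verifying that at least one of the candidate cubic surfaces actually produces a smooth irreducible intersection avoiding the four ``extra'' points; the dimension count is only heuristic, and it is conceivable (though it would be surprising) that every 13-point configuration either forces the cubic to pick up a 14th point or forces a singularity on the intersection. In practice I expect the search to terminate quickly and for the final proof to simply display one explicit cubic form $F \in \FF_4[x,y,z,w]$, in the spirit of the example in Theorem~\ref{thm:N443}, together with the direct verification of smoothness, irreducibility, and the rational-point count.
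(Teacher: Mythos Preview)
Your plan is sound and, once executed, would yield essentially the same proof as the paper. The paper simply records the outcome of such a search: it exhibits the explicit intersection of $V(Q)$ with the cubic
\[
y^2z + xz^2 + x^2w + y^2w + yzw + z^2w + xw^2 + yw^2 = 0,
\]
observes that it is smooth, and checks directly that it has 13 rational points. (The gonality-exactly-4 step via \cite[Cor.~2.4]{Faber_Grantham_GF2} is left implicit there, though it is spelled out in the analogous ternary proof.) Your proposed search over the $\binom{17}{13}$ configurations is a reasonable way to locate such an $F$, but note that the paper does not describe its search; the finished proof is just the exhibited curve plus verification, exactly as you anticipate in your final paragraph.
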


\begin{proof}
  The upper bound we want is given by Lemma~\ref{lem:N444_13}. For the lower
  bound, consider the smooth curve in $\PP^3 = \Proj \FF_4[x,y,z,w]$ that is cut
  out by the equations
  \begin{align*}
    xy + z^2 + tzw + w^2 &= 0 \\
    y^2z + xz^2 + x^2w + y^2w + yzw + z^2w + xw^2 + yw^2 &= 0.
  \end{align*}
  Direct search shows that it has 13 rational points. Thus, $N_4(4,4) \ge 13$.
\end{proof}

We spend the remainder of this section describing a computational proof of the
following non-existence result:

\begin{theorem}
  \label{thm:N445}
  $N_4(4,5) = -\infty$.
\end{theorem}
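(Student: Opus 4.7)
The plan is an exhaustive computer search over cubic surfaces that uses the classification of non-hyperelliptic genus-4 curves and the standard gonality criterion to reduce the problem to checking a pointlessness condition. By the trichotomy recalled at the start of this subsection, any non-hyperelliptic genus-4 curve over $\FF_4$ of gonality at least~4 is cut out in $\PP^3$ by a cubic and the rank-4 non-split quadric $Q = xy + z^2 + tzw + w^2$. By the same gonality criterion used in the proof of Theorem~\ref{thm:N345} (a consequence of \cite[Cor.~2.4]{Faber_Grantham_GF2}), such a curve has gonality exactly~5 precisely when it has no $\FF_{16}$-rational point. Thus the theorem amounts to showing that no cubic form $F$ simultaneously produces a smooth geometrically irreducible intersection $V(F) \cap V(Q)$ and avoids every point of $V(Q)(\FF_{16})$.

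First I would trim the cubic parameter space as in Lemma~\ref{lem:N444_13}: replacing $F$ by $F + LQ$ for a suitable linear form $L$ kills the monomials $x^2y$, $xy^2$, $z^3$, and $w^3$, cutting the projective dimension of the parameter space from~19 to~15. The orthogonal group $O(Q)$ acts on the remaining cubics, and two cubics in the same $O(Q)$-orbit produce isomorphic curves; $O(Q)$ can be computed by Algorithm~\ref{alg:ortho_group}. Since $Q$ is non-split over $\FF_4$ but splits over the quadratic extension $\FF_{16}$, we have $|V(Q)(\FF_{16})| = 17^2 = 289$. The pointlessness condition $F(P) \neq 0$ for every $P \in V(Q)(\FF_{16})$ is sufficiently restrictive that I would build the outer loop around it rather than around orbit enumeration: iterate over cubic forms in the reduced 16-coefficient space, immediately discard any $F$ vanishing at a single point of $V(Q)(\FF_{16})$, and pass the small list of survivors through $O(Q)$-orbit deduplication and finally into Sage for smoothness and geometric-irreducibility testing.

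The main obstacle is sheer scale. After the linear reductions, roughly $4^{16} \approx 4 \times 10^9$ cubic forms still flow through the outer loop, so the point-evaluation kernel must be extremely lean. I would implement this phase in C using FLINT, with packed-integer representations of cubic forms, precomputed evaluation tables at each of the 289 $\FF_{16}$-points of $V(Q)$, and early exit as soon as $F$ vanishes at any one of them. Additional speed-ups should come from exploiting $O(Q)$ as a symmetry-breaking tool --- fixing certain coefficients via stabilizer arguments rather than merely deduplicating orbits after the fact --- and from choosing an ordering of the monomials so that the most common vanishing points are tested first. The expected outcome is an empty final list of candidates, establishing $N_4(4,5) = -\infty$; as foreshadowed in the introduction, I anticipate that even with these optimizations the full search will require weeks of multi-core compute time, and independently re-verifying the empty output (via an alternate implementation or orbit count) will be as important as the search itself.
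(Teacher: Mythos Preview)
Your plan is essentially the paper's: normalize cubics by killing four monomials modulo $Q$, filter by the $\FF_{16}$-pointlessness criterion from \cite[Cor.~2.4]{Faber_Grantham_GF2}, deduplicate by $O(Q)$, and test survivors for smoothness in Sage. The paper adds two minor optimizations you omit --- fixing the $x^3$-coefficient to~$1$ and the $y^3$-coefficient to be nonzero (legitimate because $(1{:}0{:}0{:}0),(0{:}1{:}0{:}0)\in V(Q)$ must lie off any $\FF_{16}$-pointless curve), and testing only the $153$ Galois-orbit representatives of $V(Q)(\FF_{16})$ rather than all $289$ points --- with the upshot that the actual runtime is about $1.25$ hours on a single core (yielding $65{,}280$ survivors, all singular after the Sage pass), not the weeks of multi-core time you anticipate.
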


Suppose that $C$ is a curve of genus 4 over $\FF_4$ with gonality~5. Then $C$
can be realized in $\PP^3 = \Proj \FF_4[x,y,z,w]$ as the intersection of the
quadric surface
\[
Q = xy + z^2 + tzw + w^2 = 0
\]
and a cubic surface by \cite[Lem.5.1]{Faber_Grantham_GF2}. Corollary~2.4 of
\cite{Faber_Grantham_GF2} asserts that such a curve satisfies $C(\FF_{4^2}) =
\varnothing$.  Our proof has two main steps:
\begin{enumerate}
  \item[Step 1.] Loop over cubic surfaces and record those that do not
    pass through any quadratic point on $V(Q)$.
  \item[Step 2.] Intersect each of the survivors from Step 1 with the quadric
    surface $V(Q)$ and determine which, if any, are smooth and geometrically
    irreducible.
\end{enumerate}
Naively, there are $(4^{20} - 1)/3 \approx 10^{11.5}$ cubic surfaces to examine,
so we wrote a C-program to execute the search. The number of survivors was
sufficiently small ($65,280$) that we could do the various bits of commutative
algebra for Step 2 in Sage.  Ultimately, we uncovered no smooth
canonical curve $C$ of genus~4 with $C(\FF_{4^2}) = \varnothing$, which proves
the theorem.

Algorithm~\ref{alg:N445} gives a more detailed description of Step 1, which we
now discuss and justify.  Write $\vec{X} = (X_0,\ldots, X_{19})$ for the tuple
of cubic monomials in $\FF_4[x,y,z,w]$, relative to some fixed ordering.  For
any coefficient vector $\vec{c} \in \FF_4^{20} \smallsetminus \{0\}$, the dot
product $\vec{c} \cdot \vec{X}$ is a cubic form, and conversely any cubic form
can be represented by such a dot product.

\begin{definition}
  \label{def:A}
  Define the set $A \subset \FF_4^{20}$ of coefficient vectors $\vec{c}$ such
  that in the cubic form $\vec{c} \cdot \vec{X}$, the following are true:
\begin{itemize}
\item The entry corresponding to $x^3$ is 1;
\item The entry corresponding to $y^3$ is nonzero;
\item The entry corresponding to $x^2y$ is 0;
\item The entry corresponding to $xy^2$ is 0;
\item The entry corresponding to $z^3$ is 0; and
  \item The entry corresponding to $w^3$ is 0.
\end{itemize}
\end{definition}

We claim that if $C$ is a curve of gonality 5, then it may be written as the
intersection of the quadric surface $V(Q)$ and a cubic surface $V(F)$, where $F
= \vec{c} \cdot \vec{X}$ for some $\vec{c} \in A$. Indeed, the point $P = (1 : 0
: 0 : 0)$ lies on the quadric surface $V(Q)$. As $C(\FF_{4^2}) = \varnothing$,
this point does not lie on $C$, and hence $F = \vec{c}\cdot \vec{X}$ does not
vanish at $P$. That is, the $x^3$-coefficient is nonzero. Since we are only
interested in the vanishing locus of $F$, we may rescale $\vec{c}$ if necessary
so that the $x^3$-coefficient of $F$ is 1. A similar argument applied to the
point $(0,1,0,0)$ shows that we may take the $y^3$-coefficient to be nonzero.
If the $x^2y$-coefficient is $a$, then we replace $F$ with $F - axQ$ to get
another cubic with the same $x^3$- and $y^3$-coefficients, but with
$x^2y$-coefficient equal to 0. The same trick applied to an appropriate multiple
of $yQ$, $zQ$, and $wQ$ kills the coefficients on $xy^2, z^3$, and $w^3$. Note
that modifying $F$ by a multiple of $Q$ does not change its vanishing
locus. This completes the justification of the claim.

\begin{algorithm}[ht]
\caption{--- Compute a list of cubic forms $F \in \FF_4[x,y,z,w]$ such that the
  variety $V(F) \cap V(Q)$ has no point over $\FF_{4^2}$, and such that any curve
  of genus 4 and gonality $5$ is isomorphic to one of these intersections}
\begin{algorithmic}[1]
  \STATE compute representatives for the $\Gal(\FF_{4^2}/\FF_4)$-orbits in the
      set $V(Q)(\FF_{4^2}) \subset \PP^3$
  \FOR { $P \in V(Q)(\FF_{4^2})$}
  \STATE store the vector $\vec{v}_P = (X_0(P), \ldots, X_{19}(P))$ of
         cubic monomials evaluated at $P$
  \ENDFOR
  \STATE initialize an empty list $L$
  \FOR {each coefficient vector $\vec{c} \in A$ as in Definition~\ref{def:A}}
  \STATE \textbf{if} $\vec{c} \cdot \vec{v}_P \ne 0$ for every $P \in V(Q)(\FF_{4^2})$
  \textbf{then} append $F = \vec{c} \cdot \vec{X}$ to $L$
  \ENDFOR
  \STATE \textbf{return} L
\end{algorithmic}
  \label{alg:N445}
\end{algorithm}

There are 17 rational points on $V(Q)$ and $272$ quadratic non-rational
points. It follows that the set of representatives computed in the first step of
Algorithm~\ref{alg:N445} has cardinality $17 + \frac{272}{2} = 153$. Storing
the evaluated monomial vectors allows us to compute dot products rather than
cubic polynomial evaluations in the main loop.  We have already discussed why it
suffices to consider only coefficient vectors in $A$ when looking for cubic
surfaces that may contain a curve of gonality~5.  The if-statement inside the
main loop checks that $V(F)$ passes through no quadratic point of $V(Q)$; it
suffices to check this for Galois-orbit representatives since $F$ is defined
over $\FF_4$. This completes the justification of Algorithm~\ref{alg:N445}.

Our C-implementation of Algorithm~\ref{alg:N445} ran in 1.25 hours on a single
2.6Ghz Intel Core i5 with 16GB RAM, and it found $65,280$ cubic forms. 

A few practical observations are in order:
\begin{itemize}
  \item Using dot products instead of polynomial evaluations provide a huge
    savings in arithmetic operations; we learned this trick from
    \cite{Savitt_points}.
  \item The if-block in the main loop can terminate as soon as \textit{some} dot
    product is zero. If we view a cubic polynomial as a random function with
    uniform random values in $\FF_{16}$, then one would expect to evaluate
    around 16 points before seeing a zero. When we ran the code, 15.39 points
    were tested per cubic.
  \item For finite field arithmetic, we initially used the FLINT
    library. However, most of the runtime in the dot product was being spent on
    memory allocation because finite fields are implemented using polynomials
    and multi-precision integer arithmetic. To get around this obstacle, we
    created addition/multiplication tables for $\FF_4$-arithmetic and then
    performed these operations via lookup into the tables.
  \item For rapid debugging, we wrote the code to be flexible enough to work
    over $\FF_2$ or $\FF_3$ as well. Our implementation over $\FF_2$ found 104
    cubics and ran in negligible time. Over $\FF_3$, it found $2,248$ cubics and
    required 19 seconds to complete.
\end{itemize}

Now we turn to Step 2 of the computation that proves Theorem~\ref{thm:N445},
which was completed with an implementation of Algorithm~\ref{alg:N445_2}.

\begin{algorithm}[ht]
\caption{--- Compute a list of cubic forms $F \in \FF_4[x,y,z,w]$ such that any
  curve of genus 4 and gonality 5 is isomorphic to $V(F) \cap V(Q)$ for exactly one
  choice of $F$ on the list}
\begin{algorithmic}[1]
  \STATE compute the orthogonal group $O(Q)$
  \STATE initialize an empty list $M$
  \STATE $S \leftarrow$ the list of cubic forms output by Algorithm~\ref{alg:N445}. 
  \WHILE { $S$ is nonempty}
  \STATE pop the first element $F$ from $S$
  \FOR {each $g \in O(Q)$}
  \STATE compute $F(g(x,y,z,w))$, scale the coefficient on $x^3$ to be 1,
  and subtract an appropriate multiple of $Q$ to kill
  the $x^2y$- $xy^2$-, $z^3$-, and $w^3$-coefficients
  \STATE remove the resulting element from $S$
  \ENDFOR
  \STATE \textbf{if} $V(F) \cap V(Q)$ is smooth \textbf{then} append $F$ to $M$
  \ENDWHILE
  \STATE \textbf{return} $M$
\end{algorithmic}
  \label{alg:N445_2}
\end{algorithm}

Let $C$ be a curve of genus~4 and gonality~5 over $\FF_4$. We know $C = V(F)
\cap V(Q)$ for some cubic form $F$ that was output by Step~1. We may assume
without loss that $F$ is the first such form that was output, so that $F$ is
immediately added to the set $M$. If $C' = V(F') \cap V(Q)$ is another such
curve isomorphic to $C$, then there is an element $g \in \GL_4(\FF_4)$ that maps
$C$ to $C'$. Since the quadric surface containing $C'$ is unique
\cite[Lem.~5.1]{Faber_Grantham_GF2}, $g$ must lie in $O(Q)$. Thus, we have an
equality of homogeneous ideals: $(F \circ g,Q) = (F',Q)$. Consequently, there is a
linear form $H$ and $a \in \FF_4$ such that $F' = a (F \circ g) + HQ$. The while-loop
in Algorithm~\ref{alg:N445_2} identifies this relationship and removes $F'$ from
$S$. It follows that $F$, and not $F'$, appears in the list $M$.

To complete the justification of Algorithm~\ref{alg:N445_2}, we must argue that
each of its outputs yields a smooth geometrically connected scheme of
dimension~1. By construction, $C := V(F) \cap V(Q)$ is smooth for $F \in M$. And
$C$ is 1-dimensional, for otherwise $F = QH$ for some linear form $H$, which
contradicts the fact that $F$ has nonzero $x^3$-coefficient. To see that $C$ is
geometrically connected, we pass to the algebraic closure $\bar \FF_4$, where
$C$ is a $(3,3)$-divisor on the surface $V(Q) \cong \PP^1 \times \PP^1$.  If $C
= C' \cup C''$, then $C$ would have a singularity at each point of intersection
of $C'$ and $C''$.

Our Sage code performs Algorithm~\ref{alg:N445_2} in under 2 minutes. It
identified 18 ideals corresponding to 1-dimensional varieties $C$ with
$C(\FF_{4^2}) = \varnothing$, but each of these varieties is
singular. Therefore, there is no curve of genus~4 and gonality~5 over $\FF_4$.

We close with a few implementation remarks:
\begin{itemize}
  \item Sage defaults to a try/except construction to determine if a Gr\"obner
    basis or primary decomposition has already been computed for a given
    ideal. The try/except construction is painfully slow if the ``except''
    clause is executed often, so we wrote special routines that avoid these
    issues.
    
  \item As with Step~1, we tested our code over $\FF_2$ and $\FF_3$ for rapid
    debugging and verification of output. Over $\FF_2$, we found a unique curve
    of genus~4 and gonality~5 up to isomorphism. This agrees with the recent
    calculations of Xarles \cite{Xarles_genus4_GF2}. Over $\FF_3$, we found a
    unique isomorphism class of genus-4 curves with gonality~5.
\end{itemize}

%%%%%%%%%%%%%%%%%%%%%%%%%%%%%%%%%%%%%%%%%%%%%%%%%%%%%%%%%%%%%%%%%%%%%%%%%%%%%%%%
%%%%%%%%%%%%%%%%%%%%%%%%%%%%%%%%%%%%%%%%%%%%%%%%%%%%%%%%%%%%%%%%%%%%%%%%%%%%%%%%

\section{Curves of Genus~5}
\label{sec:genus5}

We have already shown that $N_3(5,2) = 8$ and $N_4(5,2) = 10$ in
\S\ref{sec:hyperelliptic}. We rapidly dispose of curves of gonality~3 and~4 with
examples in the next couple of subsections, and we turn to a description of our
computation on curves of gonality at least~5 in \S\ref{sec:gonality56}.

%%%%%%%%%%%%%%%%%%%%%%%%%%%%%%%%%%%%%%%%%%%%%%%%%%%%%%%%%%%%%%%%%%%%%%%%%%%%%%%%

\subsection{Gonality 3}

For the fields at hand, the gonality-point inequality will be a sufficient upper
bound for the number of points on a trigonal curve:
\[
\#C(\FF_q) \le 3(q+1). 
\]

\begin{theorem}
  \label{thm:N353}
  $N_3(5,3) = 12$.
\end{theorem}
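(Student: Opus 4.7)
The upper bound is immediate: the gonality-point inequality \eqref{eq:gonality-point} gives $\#C(\FF_3) \le 3(3+1) = 12$ for any curve $C$ of gonality~3 over $\FF_3$. So the whole content of the theorem lies in producing a single curve of genus~5 and gonality~3 over $\FF_3$ with exactly 12 rational points.

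My plan for the lower bound is to exhibit an explicit equation, following the same pattern used already in the paper for Theorems~\ref{thm:N343}, \ref{thm:N444}, and related results. The natural first move is to look in the literature: Niederreiter and Xing's cyclotomic function field construction supplied a genus-$4$ trigonal example with $12$ points, and analogous tables (including those feeding into \url{manypoints.org}) list a genus-$5$ curve over $\FF_3$ attaining $N_3(5)=13$; one of the presentations of such a curve, or of a nearby one, is likely to carry an obvious degree-$3$ map and to have $12$ $\FF_3$-rational points. Failing a direct quotation, I would construct a candidate $C$ as a smooth trigonal model: either as an affine equation of the form $g(y) = R(x)$ with $\deg_y g = 3$ and $R(x)$ chosen so that the twelve $\FF_3$-points $(x_0,y_0)$ with $x_0 \in \PP^1(\FF_3)$ split completely under the trigonal projection, or as the intersection of the right divisor classes on a cubic scroll in $\PP^4$ (the canonical model of a non-hyperelliptic trigonal curve of genus~$5$).

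Given a candidate equation, the verification breaks into three routine checks. First, compute the smooth projective model and confirm it has geometric genus~5; this is a one-line Magma call (\texttt{Genus}) on the function field. Second, count $\#C(\FF_3) = 12$ directly. Third, establish gonality~3: the projection witnessed by the defining equation gives gonality at most~3, and gonality cannot be $\le 2$ because by Theorem~\ref{thm:N3g2} a hyperelliptic curve of genus~5 over $\FF_3$ has at most $8$ rational points, while our curve has $12 > 8$.

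I do not foresee any real obstacle here. The search space for a trigonal equation over $\FF_3$ is very small compared to the computations in \S\ref{sec:genus4}, and the gonality question is decided purely by the hyperelliptic bound together with the $12$-point count. The only mildly delicate point is confirming smoothness and the correct genus when using an affine model with singularities at infinity or at the branch locus, but this is handled mechanically by passing to the normalization in Magma or Sage. Once such an equation is pinned down, the proof is just: quote the gonality-point bound for the upper bound, display the equation, and report the three computer verifications (genus, point count, non-hyperellipticity via Theorem~\ref{thm:N3g2}).
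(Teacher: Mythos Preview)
Your plan is correct and matches the paper's approach: the upper bound is the gonality-point inequality, and the lower bound is an explicit witness verified by direct computation. The paper's specific example is a plane quintic over $\FF_3$ with a single cusp at $(0:0:1)$, so that Lemma~6.1 of \cite{Faber_Grantham_GF2} delivers genus~5 and trigonality simultaneously (your separate gonality check via Theorem~\ref{thm:N3g2} is then unnecessary), and a direct count gives $12$ rational points on the normalization.
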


\begin{proof}
  The gonality-point inequality gives a sharp upper bound. The plane curve with
  defining polynomial
  \[
     x^3y^2 - xy^4 + x^4z - x^2y^2z + y^4z - x^2yz^2 + y^3z^2 - x^2z^3
  \]
  is singular only at $(0:0:1)$, where it has a cusp. By
  \cite[Lem.~6.1]{Faber_Grantham_GF2}, we find that its normalization $C$ has
  genus 5 and is trigonal. The above plane curve passes through 12 rational
  points of the plane. Blowing up $(0:0:1)$ yields a single rational point on
  $C$, so we conclude that $N_3(5,3) \ge 12$.
\end{proof}

\begin{theorem}
  \label{thm:N453}
  $N_4(5,3) =15$.
\end{theorem}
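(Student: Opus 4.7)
The plan is to mirror the structure of Theorem~\ref{thm:N353}. The upper bound is immediate: the gonality-point inequality \eqref{eq:gonality-point} with $q = 4$ and $\gamma = 3$ gives $\#C(\FF_4) \le 3 \cdot 5 = 15$, and so $N_4(5,3) \le 15$.

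For the lower bound, I would exhibit an explicit plane quintic $C_0 \subset \PP^2_{/\FF_4}$ that is smooth away from a single rational cuspidal point $P$. The arithmetic genus of a plane quintic is $\binom{4}{2} = 6$, and resolving an analytically irreducible cusp drops the genus by exactly $1$, so the normalization $C$ has geometric genus $5$. Projection from $P$ cuts out a generic line through $P$ in $5 - 2 = 3$ further points of $C_0$, giving a degree-3 rational map $C \to \PP^1$; by Lemma~6.1 of \cite{Faber_Grantham_GF2}, $C$ has gonality exactly $3$. It remains to arrange that $C_0$ passes through $15$ rational points of $\PP^2(\FF_4)$, with $P$ among them; since the cusp is analytically irreducible, exactly one rational point of $C$ lies over $P$, and each of the other $14$ rational smooth points of $C_0$ lifts uniquely to a rational point of $C$, yielding $\#C(\FF_4) = 15$.

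The only real work is producing the explicit defining polynomial. I would search in Sage over plane quintics that have a prescribed cusp (say at $(0:0:1)$, which one enforces by requiring the local expansion at $P$ to have vanishing order $2$ and a specified tangent direction with appropriate higher-order term), imposing linear conditions that $C_0$ vanish at a chosen subset of $14$ additional rational points drawn from $\PP^2(\FF_4) \smallsetminus \{P\}$ (which has $20$ elements). Each such condition is linear in the $21$ coefficients of a general quintic, while the cusp condition is also linear/low-dimensional. A small parameter count suggests that solutions should exist with some freedom; among them, we then check smoothness outside $P$ in Sage or Magma.

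The main potential obstacle is combinatorial: some choices of $14$ imposed rational points will force additional singularities outside $P$, shrinking the geometric genus or producing a reducible curve, so one may need to try several subsets. However, an alternative route — looking for a known trigonal genus-$5$ curve over $\FF_4$ attaining the bound in the literature (e.g., in work of Niederreiter–Xing or on \url{manypoints.org}) and then verifying its gonality via \cite[Cor.~2.4]{Faber_Grantham_GF2} or by exhibiting a degree-$3$ function directly — is entirely parallel to the authors' practice described in their remarks, and provides a safe fallback if the direct search is stubborn.
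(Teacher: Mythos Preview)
Your proposal is correct and follows essentially the same approach as the paper: the upper bound comes from the gonality-point inequality, and the lower bound is obtained by exhibiting an explicit plane quintic over $\FF_4$ with a single cusp at $(0:0:1)$ passing through $15$ rational points, whose normalization is trigonal of genus~$5$ by \cite[Lem.~6.1]{Faber_Grantham_GF2}. The paper simply records the explicit polynomial found by such a search rather than describing the search strategy, but the method is the one you outline.
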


\begin{proof}
  The gonality-point bound shows that $N_4(5,3) \le 15$. The plane curve with defining
  polynomial
  \begin{align*}
    (t+1)x^3y^2 &+x^2y^3 +txy^4 +tx^4z +tx^3yz +tx^2y^2z   \\
    + xy^3z &+ty^4 z+(t+1)x^3 z^2+(t+1)x^2 yz^2 +y^3 z^2 +x^2 z^3
  \end{align*}
  passes through 15 rational points, has a cusp at $(0:0:1)$, and no other
  singularity. Just as in the proof of Theorem~\ref{thm:N353}, its normalization
  is a trigonal curve of genus 5 and $N_4(5,3) \ge 15$.
\end{proof}

%%%%%%%%%%%%%%%%%%%%%%%%%%%%%%%%%%%%%%%%%%%%%%%%%%%%%%%%%%%%%%%%%%%%%%%%%%%%%%%%

\subsection{Gonality 4}

The upper bound $N_q(5,4) \le N_q(5)$ enables us to determine $N_q(5,4)$ for $q
= 3,4$. 

\begin{theorem}
  \label{thm:N354}
  $N_3(5,4) = 13$.
\end{theorem}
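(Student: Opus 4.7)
The plan is to handle the bound in two halves. The upper bound is handed to us by the remark just before the theorem statement: $N_3(5,4) \le N_3(5) = 13$ (see Table~\ref{tab:manypoints}). So the real task is to exhibit a curve of genus~5 and gonality~4 over $\FF_3$ with exactly 13 rational points.

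For the construction, I would first consult the literature. The optimal curves of genus~5 over $\FF_3$ have been studied via class field theory (e.g.~by Niederreiter and Xing) and catalogued at \url{manypoints.org}, so a known example attaining 13 points may already be available. Failing that, since a non-trigonal, non-hyperelliptic curve of genus~5 embeds canonically in $\PP^4$ as the complete intersection of three quadrics, one can perform a targeted computer search by looping over nets of quadratic forms in $\FF_3[x_0,\ldots,x_4]$ (modulo the $\GL_5(\FF_3)$-action) and recording those whose intersection is smooth, geometrically irreducible, and has 13 rational points. A third option is to look for a suitable singular plane model whose normalization has the desired invariants, analogous to the construction used in Theorem~\ref{thm:N353}.

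Once a candidate smooth curve $C$ is in hand, verifying genus and point count is a direct Magma or Sage computation. For gonality, the gonality-point bound~\eqref{eq:gonality-point} gives $\gamma(C) \ge 13/(q+1) = 13/4$, hence $\gamma(C) \ge 4$. For the reverse inequality, every smooth curve of genus~5 has geometric gonality at most $\lfloor (g+3)/2 \rfloor = 4$; one must then confirm that the degree-4 pencil descends to $\FF_3$, which can be done by exhibiting an explicit effective divisor $D$ of degree~4 with $h^0(D) \ge 2$ (abundant candidates exist given the 13 rational points). The main obstacle is producing the example in the first place; the one geometric subtlety — that the degree-4 pencil could in principle be defined only over $\bar \FF_3$ — is mild and handled by the Riemann--Roch check just described.
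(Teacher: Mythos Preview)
Your plan is sound and tracks the paper's proof closely. For the upper bound the paper cites Lauter \cite{Lauter_genus5_GF3} for $N_3(5)\le 13$, which is the source behind the Table~\ref{tab:manypoints} entry you invoked. For the example it follows exactly your first option --- the literature --- quoting an explicit complete intersection of three quadrics in $\PP^4$ due to Ritzenthaler \cite{Ritzenthaler_N35} with 13 rational points. The one substantive difference is in certifying gonality~4: rather than combining the gonality-point bound with a Riemann--Roch descent check on a degree-4 divisor, the paper invokes \cite[Lem.~6.6]{Faber_Grantham_GF2}, which reads the gonality directly off the net of quadrics containing the canonical curve (Ritzenthaler's net visibly contains the split rank-4 form $vw+xy$). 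That lemma sidesteps the descent subtlety you flagged; conversely, your gonality-point argument for $\gamma\ge 4$ is arguably the cleaner half, since it needs nothing about the quadric types.
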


\begin{proof}
  Ritzenthaler \cite{Ritzenthaler_N35} located the following curve $C \subset
  \PP^4 = \Proj \FF_3[v,w,x,y,z]$ of genus 5 with 13 rational points, presented
  as the complete intersection of three quadric hypersurfaces:
  \begin{align*}
    vw + xy &= 0 \\
    -vx + xz - y^2 + z^2 &= 0\\
    v^2 + vx + w^2 - z^2 &= 0.
  \end{align*}
  (To get this form from Ritzenthaler's paper, set $x_1 = v, x_2 = x, x_3 = z,
  x_4 = -y, x_5 = w$.) Lemma~6.6 of \cite{Faber_Grantham_GF2} shows that $C$ has
  gonality~4. Thus, $N_3(5,4) \ge 13$. Conversely, Lauter showed that $N_3(5)
  \le 13$ \cite{Lauter_genus5_GF3}.
\end{proof}

\begin{theorem}
  \label{thm:N454}
  $N_4(5,4) = 17$.
\end{theorem}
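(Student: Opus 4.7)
The upper bound is immediate from the tables: we have $N_4(5,4) \le N_4(5) = 17$ by Table~\ref{tab:manypoints}, so the whole game is to exhibit a curve of genus~5 and gonality~4 over $\FF_4$ achieving 17 rational points.

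The plan for the lower bound mirrors the proof of Theorem~\ref{thm:N354}. A non-hyperelliptic, non-trigonal, non-plane-quintic curve of genus~5 embeds canonically into $\PP^4$ as the complete intersection of three quadric hypersurfaces. I would search for such a complete intersection $C \subset \PP^4_{\FF_4}$ that (a) is smooth and geometrically irreducible of the expected genus~5, and (b) passes through 17 rational points of $\PP^4$. Given the known optimal value $N_4(5) = 17$, such curves are known to exist; one can either mine the literature (the tables on \url{manypoints.org} trace back to explicit examples, and van der Geer--van der Vlugt or later authors exhibit optimal genus-5 curves over $\FF_4$) or run a targeted search over triples of quadrics whose common zero locus contains a prescribed 17-point subset of $\PP^4(\FF_4)$, analogous to the 13-point example Ritzenthaler found over $\FF_3$. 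Once a candidate ideal $(Q_1,Q_2,Q_3)$ is in hand, smoothness, dimension, geometric irreducibility, genus, and point count are all verifiable by a direct Magma or Sage computation.

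Finally, I would invoke Lemma~6.6 of \cite{Faber_Grantham_GF2} to conclude that such a canonical complete-intersection model automatically has gonality~$4$ (it is non-hyperelliptic of genus~5, and the hypothesis of the lemma rules out the trigonal and plane-quintic cases, which are the only genus-5 curves of gonality $\le 3$ or embedded differently). Combining the explicit 17-point curve with the upper bound $N_4(5) = 17$ then yields $N_4(5,4) = 17$.

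The only real obstacle is producing the explicit curve; the arithmetic of finding three quadrics cutting out exactly a prescribed 17-element subscheme of $\PP^4(\FF_4)$ is a small linear-algebra problem once one knows (or guesses) the point set, so I expect the step to reduce to a modest search. All remaining verifications (smoothness via the Jacobian criterion, genus via adjunction on the complete intersection, gonality via Lemma~6.6) are routine.
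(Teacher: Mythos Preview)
Your upper bound and overall strategy are fine, and in spirit you are close to the paper. But the key step---certifying that the 17-point example has gonality exactly~4---is not handled correctly in your plan.

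You write that Lemma~6.6 of \cite{Faber_Grantham_GF2} shows a smooth complete intersection of three quadrics in $\PP^4$ ``automatically has gonality~4.'' That is not what the lemma says. Being such a complete intersection only rules out the hyperelliptic and trigonal cases, i.e.\ it gives gonality $\ge 4$. The gonality could just as well be~5 or~6; indeed, the entire computation in \S\ref{sec:gonality56} searches precisely among smooth complete intersections of three quadrics for curves of gonality~5 and~6. Lemma~6.6 is a \emph{criterion}: the curve has gonality~4 if and only if the net $\langle Q_1,Q_2,Q_3\rangle$ contains a quadric of a suitable low-rank split type (in the Ritzenthaler example over $\FF_3$, this is the visibly rank-4 split form $vw+xy$). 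Your search plan does not force this, and your parenthetical justification only argues gonality $\ge 4$. So as written, after finding a 17-point complete intersection you would still owe an argument that it admits a $g^1_4$ over $\FF_4$---either by exhibiting a rank-$\le 4$ quadric with rational rulings in the net, or by some other means. You cannot appeal to $N_4(5,5)=5$ or $N_4(5,6)=-\infty$, since those are established later.

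For comparison, the paper sidesteps this entirely. It uses Fischer's curve
\[
y^2+y+x^3=0,\qquad z^2+z+yx^2+xy^2=0,
\]
which is visibly a double cover of the elliptic curve $E:y^2+y=x^3$; composing with the degree-2 map $E\to\PP^1$ gives a degree-4 map $C\to\PP^1$, so $\gamma\le 4$. The bound $\gamma\ge 4$ then comes for free from the gonality-point inequality: $\gamma\le 3$ would force $\#C(\FF_4)\le 15<17$. This avoids any analysis of the net of quadrics. Your approach is salvageable---just add the check that your candidate net contains a split rank-4 quadric---but the paper's tower construction makes the gonality transparent.
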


\begin{proof}
  Howe and Lauter showed that $N_4(5) = 17$ \cite{Howe_Lauter_2003}, so it
  suffices to exhibit a curve of genus 5 and gonality 4 with 17 rational
  points. Fischer gave the following example on \url{manypoints.org} in
  2014. Let $C$ be the (smooth proper) curve birational to the affine scheme in
  $\Aff^3 = \FF_4[x,y,z]$ cut out by the equations
  \begin{align*}
    y^2 + y + x^3  &= 0 \\
    z^2 + z + yx^2 + x y^2 &= 0.
  \end{align*}
  Magma verifies that it has genus 5 and 17 rational points. If we write $E$ for
  the elliptic curve with affine equation $y^2 + y = x^3$, then we see
  immediately that $C$ is a double-cover of $E$. Consequently, $C$ has gonality
  at most~4. If $C$ has gonality $\gamma \le 3$, then the gonality-point bound
  shows that $\#C(\FF_4) \le 5\gamma \le 15$. Thus $C$ has gonality exactly 4,
  and $N_4(5,4) = 17$.
\end{proof}

%%%%%%%%%%%%%%%%%%%%%%%%%%%%%%%%%%%%%%%%%%%%%%%%%%%%%%%%%%%%%%%%%%%%%%%%%%%%%%%%

\subsection{Gonality 5 and 6}
\label{sec:gonality56}

The primary difficulty in immediately applying the code that we used to treat
curves over $\FF_2$ in \cite{Faber_Grantham_GF2} is that the relevant search
space of quadric surfaces has increased dramatically in size: from $2^{15}
\approx 10^{4.5}$ to $3^{15} \approx 10^{7.2}$ or $4^{15} \approx
10^{9.0}$. Beyond some coding tricks to speed things up and to deal with memory
management, this required three major changes to our approach:
\begin{itemize}
  \item We moved away from thinking of a curve $C$ as cut out by a triple
    $(Q_1,Q_2,Q_3)$ of quadratic forms and toward thinking of $C$ as defined by
    the homogeneous ideal generated by this triple. In practice, this cut down
    the number of choices of $Q_2$ to be considered for each of the standard
    choices of $Q_1$. Additional details are given below.
      
  \item We did not attempt to find \textit{all} curves of gonality at least
    5. Instead, we targeted curves with a particular number of points and either
    stopped searching as soon as an example was located or finished the search
    to certify that no curve with that many points exists. The available bounds
    $N_3(5) = 13$ and $N_4(5) = 17$ limited the total number of searches that
    were necessary.
    
  \item We distributed much of the computation across $48$ CPUs --- Xeon(R)
    E5-2699 v3 \@ 2.30GHz with 500GB shared memory --- each running its own Sage
    process in order to complete the searches in a reasonable amount of
    time. Despite this, some of the searches over $\FF_4$ still required
    multiple days of compute time.
\end{itemize}

We now briefly sketch the strategy used to search for curves of genus 5 and
gonality at least~5; the reader should consult \cite[\S6.4]{Faber_Grantham_GF2}
for additional details.  A non-hyperelliptic, non-trigonal curve of genus 5 can
be written as the intersection of three quadric surfaces in $\PP^4 = \Proj
\FF_q[v,w,x,y,z]$, given by quadratic forms $Q_1, Q_2, Q_3$. By
\cite[Lem.~6.6]{Faber_Grantham_GF2}, we may take $Q_1$ to be of one of the
following forms:
\begin{itemize}
\item[III.] $vw+N(x,y)$, where  $N$ is a norm form for $\FF_{q^2} / \FF_q$, or
\item[IV.] $vw + xy + z^2$.
\end{itemize}
We may use the action of the orthogonal group $O(Q_1)$ to restrict $Q_2$ to a
small set of forms. More precisely, we construct a set $A(Q_1)$ of quadratic
forms $Q_2$ such that
  \begin{enumerate}
    \item[(1)] Each ideal $\langle Q_1,Q_2\rangle$ cuts out a surface in $\PP^4$;
    \item[(2)] Every quadratic form in  the ideal $\langle Q_1,Q_2\rangle$ is of the
      same type as $Q_1$ or of type IV; and
    \item[(3)] For any quadratic form $Q$ such that the ideal $\langle Q_1,Q
      \rangle$ satisfies (1) and (2), there is a unique $Q_2 \in A(Q_1)$ and an
      element $g \in O(Q_1)$ such that $\langle Q_1, Q \circ g\rangle = \langle
      Q_Q,Q_2 \rangle$.
  \end{enumerate}
Finally, we let $B(Q_1)$ be the set of quadratic forms of the same type as $Q_1$
or of type IV.

Before beginning any of the searches for genus~5 curves, we precompute $O(Q_1)$,
$B(Q_1)$, and $A(Q_1)$, in that order. Computing $O(Q_1)$ as in
Section~\ref{sec:orthogonal_groups} is straightforward and fast. The computation
of $B(Q_1)$ is also straightforward: for each quadratic form $Q$ we can
determine its type by computing the dimension of the singular locus of the
quadric surface $V(Q)$ and the number of rational points on this surface. As
there are a huge number of forms to look at, and as this operation is easily
parallelized, we distributed it across $24$ CPUs --- Xeon(R) E5-2699 v3 \@
2.30GHz with 500GB shared memory --- each running its own Sage process. Finally,
computing $A(Q_1)$ requires one to keep track of which ideals have
already appeared in some $O(Q_1)$-orbit; this was performed on a single
CPU. Knowledge of the precomputed set $B(Q_1)$ was used to determine the type of
candidate elements of $A(Q_1)$.  See Table~\ref{table:precomp} for the sizes of
these sets and the wall time required to compute them. Note that when computing
$B(vw + xy + z^2)$, we can recycle most of the computation from $B(vw +N(x,y))$,
so the latter requires substantially less wall time to compute.

\begin{table}[p]
  \begin{tabular}{c|c || c|c||c|c||c|c}
    $q$ & $ Q_1$ & $\#O(Q_1)$ & wall time & $\#B(Q_1)$ & wall time & $\#A(Q_1)$ & wall time\\
    \hline
    \hline
    $2$ & $vw + x^2 + xy + y^2$ & 1920 & 0s & 19,096 & 10s & 11 & 11s \\
    & $vw + xy + z^2$ & 720 & 0s & 13,888 & 0s & 5 & 4s \\
    \hline
    $3$ & $vw + x^2 + y^2$ & $116,640^*$ & 3s & 5,606,172 & 13m 11s & 33 & 51m \\
    &$vw + xy + z^2$ & $51,840^*$ & 19s & 4,586,868& 16s & 16 & 24m \\
    \hline
    $4$ & $vw + x^2 + txy + y^2$ & 2,088,960 & 2m 13s & 305,230,464 & 17h 1m & 83 & 43h 47m \\
    & $vw + xy + z^2$ & 979,200 & 3m & 263,983,104 & 12m & 37 & 20h 19m
  \end{tabular}
  \caption{Size and wall time for computation of the sets $O(Q_1)$, $B(Q_1)$,
    and $A(Q_1)$. We include the data for the case $q = 2$ for comparison, both
    with the cases $q = 3,4$ as well as with the sizes in
    \cite[\S6.4]{Faber_Grantham_GF2}. The asterisk on the orthogonal group
    sizes for the case $q = 3$ indicate that we computed and counted
    $PO(Q_1) = O(Q_1) / \{\pm 1\}$.}
  \label{table:precomp}
\end{table}

With these precomputations in hand, we now give a coarse description of our
algorithm for searching for curves of genus 5 and gonality 5 or 6; see
Algorithm~\ref{alg:gonality5}. Rather than searching for all such curves, we
instead attempt to find just one such curve if it exists. In order to keep each
individual process from running too long, we target curves with a particular
number of rational points. For curves of gonality~5, the total number of
searches is limited by $N_q(5,5) \le N_q(5)$. For curves of gonality~6, we look
for pointless curves with no rational point over $\FF_{q^3}$
\cite[Cor.~2.5]{Faber_Grantham_GF2}.

\begin{algorithm}[ht]
\caption{--- Find a genus~5 curve over $\FF_q$ with (a) gonality 5 and $n$ rational points, or (b) gonality 6.}
  \begin{algorithmic}[1]
\FOR { $Q_1 \in \{vw + N(x,y), vw + xy + z^2\}$ }
\FOR { $(Q_2,Q_3) \in A(Q_1) \times B(Q_1)$ }
\STATE \textbf{if} case (a) and $\#V(Q_1,Q_2,Q_3)(\FF_q) \ne n$ \textbf{then} continue
\STATE \textbf{if} case (b) and $\#V(Q_1,Q_2,Q_3)(\FF_{q^3}) > 0$ \textbf{then} continue
\STATE \textbf{if}
  every nonzero member of the linear span of $\{Q_1,Q_2,Q_3\}$ has the same
  type as $Q_1$ or type~IV, and the variety $V(Q_1,Q_2,Q_3)$ is irreducible
  and smooth of dimension~1 \textbf{then} \textbf{return} $(Q_1,Q_2,Q_3)$
\ENDFOR
\ENDFOR
\end{algorithmic}
  \label{alg:gonality5}
\end{algorithm}

Tables~\ref{table:numpoints3} and~\ref{table:numpoints4} indicate whether we
found a curve lying on $V(Q_1)$ with a particular number of points or with
gonality~6, as well as the total wall time involved for all searches.  As a
sanity check on our code, and also to see how this new strategy compares to the
one in \cite{Faber_Grantham_GF2}, we consider the case of curves over $\FF_2$;
these details are given in Table~\ref{table:numpoints2}.

\begin{table}[p]
  \begin{tabular}{c||c|c|c|c|c|c|c}
    $Q_1$ \ $\backslash$ \ $\#C(\FF_2)$ &  0 & 1 & 2 & 3 & $\ge 4$ & gonality 6 & Wall Time\\
    \hline \hline
    $vw + x^2 + y^2$ &  \yes & \yes & \yes  & \no & \no & \no & 40s \\ 
    \hline
    $vw + xy + z^2$  & \no & \no & \no  & \yes & \no & \no & 24s \\ 
    \hline
  \end{tabular}
  \caption{Data for curves of genus~5 and gonality at least~5 over $\FF_2$. We
    indicate whether a curve of gonality~5 on $V(Q_1)$ with a specified number
    of rational points exists, whether a curve of gonality~6 on $V(Q_1)$ exists,
    and the total wall time required on 48 CPUs for all of the searches. Note
    that $\#C(\FF_2) \le N_2(5) = 9$.}
  \label{table:numpoints2}
\end{table}

\begin{table}[p]
  \begin{tabular}{c||c|c|c|c|c|c|c|c}
    $Q_1$ \ $\backslash$ \ $\#C(\FF_3)$ &  0 & 1 & 2 & 3 & 4 & $ \geq 5$ & gonality 6 & Wall Time \\
    \hline \hline
    $vw + x^2 + y^2$ &  \yes & \yes & \yes  & \yes & \no & \no & \no & 59m \\ 
    \hline
    $vw + xy + z^2$  & \no & \no & \no  & \no & \yes & \no & \no & 21m \\ 
    \hline
  \end{tabular}
  \caption{Data for curves of genus~5 and gonality at least~5 over $\FF_3$. We
    indicate whether a curve of gonality~5 on $V(Q_1)$ with a specified number
    of rational points exists, whether a curve of gonality~6 on $V(Q_1)$ exists,
    and the total wall time required on 48 CPUs for all of the searches. Note
    that $\#C(\FF_3) \le N_3(5) = 13$.}
  \label{table:numpoints3}
\end{table}

\begin{table}[p]
  \begin{tabular}{c||c|c|c|c|c|c|c|c|c}
    $Q_1$ \ $\backslash$ \ $\#C(\FF_4)$ &  0 & 1 & 2 & 3 & 4 & 5 & $ \geq 6$ & gonality 6 & Wall Time \\
    \hline \hline
    $vw + x^2 + txy + y^2$ &  \yes & \yes & \yes  & \yes & \yes & \no & \no & \no & 305h 52m \\ 
    \hline
    $vw + xy + z^2$  & \no & \no & \no  & \no & \no & \yes & \no & \no & 43h 26m \\ 
    \hline
  \end{tabular}
  \caption{Data for curves of genus~5 and gonality at least~5 over $\FF_4$. We
    indicate whether a curve of gonality~5 on $V(Q_1)$ with a specified number
    of rational points exists, whether a curve of gonality~6 on $V(Q_1)$ exists,
    and the total wall time required on 48 CPUs for all of the searches. Note
    that $\#C(\FF_4) \le N_4(5) = 17$.}
  \label{table:numpoints4}
\end{table}

Looking at the tables, we immediately conclude the following:

\begin{theorem}
  \label{thm:N56}
  \[
  N_3(5,5) = 4; \quad N_3(5,6) = -\infty; \quad N_4(5,5) = 5; \quad \text{and} \quad N_4(5,6) = -\infty.
\]
\end{theorem}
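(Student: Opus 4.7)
The plan is to reduce the four assertions to inspection of Tables~\ref{table:numpoints3} and~\ref{table:numpoints4}, which record the output of Algorithm~\ref{alg:gonality5}. By the canonical embedding together with Lemma~6.6 of \cite{Faber_Grantham_GF2}, any non-hyperelliptic, non-trigonal curve of genus~5 over $\FF_q$ sits in $\PP^4$ as a complete intersection $V(Q_1,Q_2,Q_3)$ with $Q_1$ of type~III ($vw + N(x,y)$) or type~IV ($vw + xy + z^2$). Imposing that every nonzero quadric in the pencil $\langle Q_1,Q_2,Q_3\rangle$ has type~III or~IV encodes the requirement that the canonical model has gonality at least~5 rather than~4. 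Up to the action of $O(Q_1)$, the valid pencils are parameterized by $A(Q_1)\times B(Q_1)$, which are precomputed as recorded in Table~\ref{table:precomp}.

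For the statements on $N_q(5,5)$, I would use the bounds $N_3(5) = 13$ and $N_4(5) = 17$ to restrict the candidate point counts $n$. For each $Q_1$ and each feasible $n$, Algorithm~\ref{alg:gonality5}, case~(a), seeks a triple $(Q_1,Q_2,Q_3)$ cutting out a smooth, geometrically irreducible, one-dimensional variety with exactly $n$ rational points and with the required pencil type. Reading off Tables~\ref{table:numpoints3} and~\ref{table:numpoints4}, the largest $n$ realized is $4$ over $\FF_3$ (on $V(vw + xy + z^2)$) and $5$ over $\FF_4$ (on $V(vw + xy + z^2)$); every larger target is ruled out on both choices of $Q_1$. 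This proves $N_3(5,5) = 4$ and $N_4(5,5) = 5$.

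For the gonality-6 statements, I would invoke Corollary~2.5 of \cite{Faber_Grantham_GF2}: a curve of gonality~6 over $\FF_q$ admits no rational point over $\FF_{q^3}$. This is the filter used by case~(b) of Algorithm~\ref{alg:gonality5}. Running it on both choices of $Q_1$ for $q = 3$ and $q = 4$ returns no witness, so $N_3(5,6) = -\infty$ and $N_4(5,6) = -\infty$.

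The main obstacle is not conceptual but computational: the search spaces $A(Q_1)\times B(Q_1)$ are huge --- the $\FF_4$ enumerations alone require hundreds of hours of wall time on 48 CPUs --- so the real work is confidence in the implementation. That confidence comes from reproducing the binary-field results of \cite{Faber_Grantham_GF2} as recorded in Table~\ref{table:numpoints2}, from exhibiting explicit witness curves at every attained point count, and from cross-checking $\#O(Q_1)$ against Dickson's order formulas in \S\ref{sec:orthogonal_groups}.
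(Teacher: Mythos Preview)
Your proposal is correct and follows essentially the same approach as the paper: the theorem is stated there as an immediate consequence of Tables~\ref{table:numpoints3} and~\ref{table:numpoints4}, which record the output of Algorithm~\ref{alg:gonality5}, and you have accurately summarized the surrounding setup from \S\ref{sec:gonality56}. One minor terminological slip: the linear span $\langle Q_1,Q_2,Q_3\rangle$ is a net of quadrics, not a pencil.
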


\begin{example}
  Our search uncovered the following example of a curve of genus 5 and gonality
  5 over $\FF_3$ with 4 rational points:
  \begin{align*}
    Q_1 &= vw + xy + z^2 \\
    Q_2 &= x^2 + wy + vz - xz \\
    Q_3 &= vw - wx + vy + xy + vz + xz + yz
  \end{align*}
\end{example}

\begin{example}
  We located the following example of a curve of genus 5 and gonality 5 over
  $\FF_4$ with 5 rational points:
  \begin{align*}
    Q_1 &= vw + xy + z^2 \\
    Q_2 &= vz + wy + x^2 + tz^2 \\
    Q_3 &= v^2 + vw + wz + txy + xz + y^2 + z^2
  \end{align*}
\end{example}

%% Staring at Tables~\ref{table:numpoints2},~\ref{table:numpoints3},
%% and~\ref{table:numpoints4}, we are led to ask the following question:

%% \begin{question}
%%   If $C_{/\FF_q}$ is a curve of genus 5 and gonality 5 with $N_q(5,5)$ rational
%%   points, can we conclude that $C$ lies on no singular quadric surface in $\PP^4$?
%% \end{question}

%%%%%%%%%%%%%%%%%%%%%%%%%%%%%%%%%%%%%%%%%%%%%%%%%%%%%%%%%%%%%%%%%%%%%%%%%%%%%%%%
%%%%%%%%%%%%%%%%%%%%%%%%%%%%%%%%%%%%%%%%%%%%%%%%%%%%%%%%%%%%%%%%%%%%%%%%%%%%%%%%

\begin{comment}
  README
  
  Modules/Scripts used:
  progress.py
  sage_launcher.py
  search_tools.py
  
  genus4_optimal_search.sage
  
  genus4 (single-threaded c-code)
  genus4_gonality5_finish.sage

  singularity_dimension.sage
  point_count.sage
  genus5_search.sage
\end{comment}

\bibliographystyle{plain}
\bibliography{ternary}
\end{document}